\newcommand{\al}{\alpha}
\def\b{\beta}
\newcommand{\De}{\Delta}
\newcommand{\la}{\lambda}
\newcommand{\z}{\zeta}
\newcommand{\te}{\theta}
\newcommand{\x}{\xi}
\newcommand{\sm}{\sigma}
\newcommand{\f}{\phi}
\newcommand{\F}{\Phi}
\newcommand{\om}{\omega}
\newcommand{\ol}[1]{\overline{#1}}
\newcommand{\op}{\oplus}
\newcommand{\ot}{\otimes}
\newcommand{\bt}{\boxtimes}
\renewcommand{\o}{\otimes}
\newcommand{\rational}{\mathbb{Q}}
\def\Vec{\operatorname{Vec}}
\def\Fun{\operatorname{Fun}}
\newcommand{\fg}{\mathfrak{g}}
\newcommand{\fA}{\mathfrak{A}}
\newcommand{\fd}{\mathfrak{d}}
\newcommand{\so}{\mathfrak{so}}
\newcommand{\BB}{\mathcal{B}}
\newcommand{\BC}{\mathbb{C}}
\newcommand{\BN}{\mathbb{N}}
\newcommand{\BQ}{\mathbb{Q}}
\newcommand{\BZ}{\mathbb{Z}}
\newcommand{\BR}{\mathbb{R}}
\renewcommand{\AA}{\mathcal{A}}
\newcommand{\CC}{{\mathcal{C}}}
\newcommand{\DD}{\mathcal{D}}
\newcommand{\II}{\mathcal{I}}
\newcommand{\HH}{\mathcal{H}}
\newcommand{\LL}{\mathcal{L}}
\newcommand{\ZZ}{\mathcal{Z}}
\newcommand{\EE}{\mathcal{E}}
\def\FF{\mathcal{F}}
\newcommand{\RR}{\mathcal{R}}
\newcommand{\WW}{\mathcal{W}}
\def\UU{\mathcal{U}}
\newcommand{\ld}{\lambda}
\newcommand{\e}{\varepsilon}
\newtheorem{thm}{Theorem}[section]
\newtheorem{lem}[thm]{Lemma}
\newtheorem{prop}[thm]{Proposition}
\newtheorem{cor}[thm]{Corollary}
\newtheorem{rmk}[thm]{Remark}
\theoremstyle{definition}
\newcommand{\ev}{\operatorname{ev}}
\newcommand{\coev}{\operatorname{coev}}
\def\sl{\mathfrak{sl}}
\newcommand{\id}{\operatorname{id}}
\newcommand{\gal}{\operatorname{Gal}}
\newcommand{\irr}{\operatorname{Irr}}
\newcommand{\FPdim}{\operatorname{FPdim}}
\newcommand{\ord}{\operatorname{ord}}
\newcommand{\Rep}{\operatorname{Rep}}
\newcommand{\sgn}{\operatorname{sgn}}
\newcommand{\lcm}{\operatorname{lcm}}
\newcommand{\1}{\mathds{1}}
\def\GQ{\gal(\bar\BQ/\BQ)}
\def\pt{{\operatorname{pt}}}
\def\O2{\Omega_2}
\def\Sem{\mbox{\upshape \textbf{Sem}}}
\def\f2{\varphi_{2}}
\newcommand{\NN}{\mathcal{N}}
\newcommand{\jac}[2]{\left( \frac{#1}{#2} \right)_{\operatorname{J}}}
\renewcommand{\AA}{\mathcal{A}}
\def\Vs{{\operatorname{Vec}}}
\def\fQ{{\mathfrak{Q}}}
\def\fP{{\mathfrak{P}}}
\title{The Witt classes of $\so(2r)_{2r}$}
\author[Rowell]{Eric C. Rowell}
\address{Department of Mathematics\\
    Texas A\&M University\\
    College Station, TX 77843-3368\\
    U.S.A.}
    \email{rowell@math.tamu.edu}
\author[Ruan]{Yuze Ruan}
\address{Beijing Institute of Mathematical Science and Applications (BIMSA)\\
Huairou, Beijing, China.}
    \email{yuzeruan@bimsa.cn}
\author[Wang]{Yilong Wang}
\address{Beijing Institute of Mathematical Science and Applications (BIMSA)\\
Huairou, Beijing, China.}
    \email{wyl@bimsa.cn}
\date{\today}
\begin{document}

\maketitle
\begin{abstract}
We study the Witt classes of the modular categories $\so(2r)_{2r}$ associated with quantum groups of type $D_r$ at $(4r-2)$-th roots of unity. From these classes we derive infinitely many Witt classes of order 2 that are linearly independent modulo the subgroup generated by the pointed modular categories.  In particular we produce an example of a simple,  completely anisotropic modular category that is not pointed whose Witt class has order 2, answering a question of Davydov, M\"uger, Nikshych and Ostrik.  Our results show that the trivial Witt class $[\Vec]$ has infinitely many square roots modulo the pointed classes, in analogy with the recent construction of infinitely many square roots of the Ising Witt classes modulo the pointed classes constructed in a similar way from certain type $B_r$ modular categories.  We compare the subgroups generated by the Ising square roots and $[\Vec]$ square roots and provide evidence that they also generate linearly independent subgroups.
\end{abstract}

\section{Introduction}
Modular categories play a central role in the study of 2-dimensional topological phases of matter \cite{Nayaketal2008}, invariants of 3-manifolds \cite{Tur10} and conformal field theory \cite{MooreSeiberg}.  The problem of classifying modular categories led to the definition \cite{DMNO} of 
the Witt group $\WW$ for non-degenerate braided fusion categories, generalizing the Witt group for abelian groups equipped with non-degenerate quadratic forms (see \cite{DGNO}). Two non-degenerate braided fusion categories $\CC$ and $\DD$ are Witt equivalent if they are equivalent ``modulo Drinfeld centers," i.e., if $\CC \boxtimes \ZZ(\AA) \cong \DD\boxtimes \ZZ(\BB)$ for some fusion categories $\AA$, $\BB$.

 Recently \cite{NRWZ20a} it was shown that the 8 Witt classes of Ising categories have infinitely many independent square roots modulo the subgroup generated by pointed categories.  This was achieved using the \emph{signature}, a new Witt class invariant related to the higher central charge introduced in \cite{NSW19}.  From this a verification of a conjecture of \cite{DNO} was derived, namely, that the torsion subgroup $s\WW_2$ of the super-Witt group has infinite rank.  The categories representing the square roots of the Ising Witt classes are of the form $\so(N)_N$ with $N$ odd, obtained from the (Lie type $B$) quantum groups  $U_q\so(N)$ at $q=e^{\pi i/(4N-4)}$.  In this article we study the case $N$ even, obtained from (Lie type $D$) quantum groups of  $U_q\so(N)$ with $q=e^{\pi i/(2N-2)}$.  From these we obtain infinitely many Witt classes of order $2$, which intersect trivially with the subgroup generated by the pointed and Ising classes.  Thus we have infinitely many square roots of the trivial Witt class $[\Vec]$, modulo the subgroup generated by the pointed and Ising Witt classes.  From this we derive an affirmative answer to  \cite[Question 6.8]{DMNO}: there \emph{does} exist a completely anisotropic simple modular category whose Witt class has order 2.  We also compare the Lie type $D$ Witt classes studied here with the Lie type $B$ Witt classes of \cite{NRWZ20a}.  Although we cannot show there are no relations among them, we do find an infinite sequence of type $D$ classes that intersects trivially with the infinite sequence of type $B$ Witt classes used in \cite{NRWZ20a} to verify the conjecture of \cite{DNO}.

Our results further illustrate the usefulness of the signature for distinguishing Witt classes.  The methods we use are  number-theoretical and thus require carefully chosen parameters.  It would be interesting to understand the relations among all of these Witt classes.  We believe that the examples we study provide infinitely many Witt-independent examples of non-pointed completely anisotropic simple modular categories (so-called property $S$ categories) with order 2 Witt classes, but we only verify a single example.  

Here is a more detailed outline of this article.  After providing the notation and context of the problem we derive square roots of $[\Vec]$ from quantum groups of Lie type $D_r$, treating the $r$ odd and $r$ even case separately.  Next we compute their signatures, and use them to find a sub-sequence of these Witt classes that are linearly independent, modulo the subgroup generated by pointed and Ising categories.  Next we compare the Witt subgroups we study with those of \cite{NRWZ20a}.  Finally we show that at least one square root of $[\Vec]$ is both simple and anisotropic, answering a question of \cite{DMNO}.

\section*{Acknowledgement}
The authors would like to thank Siu-Hung Ng and Andrew Schopieray for helpful discussions, especially for pointing out that the total positivity of connected \'etale algebras can be used to simplify arguments in Section 6. We also thanks Daniel Bump for the very useful \texttt{FusionRing} package in \texttt{SageMath}, used to test our results. E.C.R. was partially supported by NSF grant DMS-1664359 and a Presidential Impact Fellowship from Texas A\&M University.

\section{Preliminaries and Notation}

We assume some familiarity with the standard notions in the theory of fusion categories, referring the reader to \cite{EGNO} for a complete treatment. Mainly to fix notation we provide some details.
\subsection{Fusion categories}\label{subsec:FC}
A \emph{fusion category} over $\BC$ is a semisimple, $\BC$-linear abelian, rigid monoidal category with finite-dimensional Hom-spaces and finitely many isomorphism classes of simple objects among which is the tensor unit $\1$ \cite{ENO05}. For any fusion category $\CC$, we denote by $\irr(\CC)$ a complete set of representatives of the isomorphism classes of $\CC$. The tensor product endows $K_0(\CC)$, the Grothendieck group of $\CC$, with a ring structure. More precisely, we have $X\ot Y = \sum_{Z\in\irr(\CC)} N_{X,Y}^{Z} Z$ for any $X, Y\in\irr(\CC)$, where
$$N_{X,Y}^{Z} := \dim_{\BC}\CC(X\ot Y, Z)\,.$$ 
For any $X\in\irr(\CC)$, let $\NN_{X}$ be the square matrix of size $|\irr(\CC)|$ such that $(\NN_{X})_{Y, Z} = N_{X,Y}^{Z}$ for any $Y, Z \in \irr(\CC)$. The \emph{Frobenius-Perron dimension} of $X\in\irr(\CC)$, denoted by $\FPdim(X)$, is the largest positive eigenvalue for $\NN_{X}$ (see \cite{ENO05}). The Frobenius-Perron dimension of $\CC$ is defined to be
$$
\FPdim(\CC) = \sum_{X \in \irr(\CC)} \FPdim(X)^2\,.
$$

Let $\CC$ be a fusion category. The rigidity of $\CC$ means that for any object $X\in\CC$, there is a left dual $(X^*, \ev_X, \coev_X)$, where $X^*\in\CC$ is an object, $\ev_{X}: X^* \o X \to \1$ and $\coev_{X}: \1 \to X \o X^{*}$ are morphisms satisfying the duality conditions in \cite[Def.~2.10.1]{EGNO}. The notion of a right dual is similarly defined \cite[Def.~2.10.2]{EGNO}. It is well-known that dual objects are unique up to isomorphism (see, for example, \cite[Lem.~2.1.5]{BK01}). A simple object $X$ of $\CC$ is called \emph{invertible} if $\ev_X$ and $\coev_X$ above are isomorphisms. A fusion category is \emph{pointed} if all of its simple objects are invertible. We denote the maximal pointed fusion subcategory of a fusion category $\CC$ by $\CC_\pt$. A fusion category $\CC$ is called \emph{unpointed} if $\irr(\CC_\pt) = \{\1\}$.

The \emph{global dimension} $\dim(\CC)$ of a fusion category $\CC$ was introduced in \cite[Def.~2.5]{Mug031}. By \cite[Rmk.~2.5]{ENO05}, $\dim(\CC)$ is a totally positive algebraic integer. We will denote the positive square root of $\dim(\CC)$ by $\sqrt{\dim(\CC)}$, which will play a central role in our results. Finally, a fusion category $\CC$ is called \emph{pseudounitary} if $\dim(\CC) = \FPdim(\CC)$. 

\subsection{Braided and ribbon fusion categories}\label{subsec:BFC}
Let $\CC$ be a fusion category. A \emph{braiding} on $\CC$ is a natural isomorphism 
$\b_{V, W}: V \ot W \xrightarrow{\cong} W \ot V$ 
satisfying the Hexagon axioms (see \cite[Chap.~8]{EGNO}). A fusion category equipped with a braiding is called a \emph{braided fusion category}. 

Let $\CC$ be a braided fusion category equipped with a braiding $\beta$. For any fusion subcategory $\DD$ in $\CC$, the \emph{centralizer} of $\DD$ in $\CC$, denoted by $\DD'$, is the full fusion subcategory of $\CC$ generated by objects $V \in \CC$ such that $\b_{W, V}\circ\b_{V, W} = \id_{V \ot W}$ for all $W \in \DD$. The centralizer $\CC'$ of $\CC$ itself is called the \emph{M\"uger center} of $\CC$.  A braided fusion category $\CC$ is called \emph{non-degenerate} if $\irr(\CC') = \{\1\}$, i.e., $\CC'$ is equivalent to $\Vs$, the category of finite-dimensional vector spaces over $\BC$. 

For any fusion category $\CC$, its \emph{Drinfeld center}, denoted by $\ZZ(\CC)$, is the category with objects of the form $(V, c_{-,V})$, where $V \in \CC$ and $c_{X, V}: X \ot V \xrightarrow{\cong} V\ot X$ is a natural family of isomorphisms satisfying the half-braiding conditions in \cite[Def.~XIII.4.1]{Kas95}. It is well-known that $\ZZ(\CC)$ is a non-degenerate braided fusion category (see, for example \cite[Cor.~3.9]{DGNO}).

A braided fusion category $\CC$ is called \emph{symmetric} if $\CC' = \CC$. 
The fusion category $\Rep(G)$ of finite-dimensional complex representations of a finite group $G$, endowed with the standard braiding, is a symmetric fusion category also denoted by $\Rep(G)$ for convenience. A symmetric fusion category equivalent to $\Rep(G)$ for some finite group $G$ is called \emph{Tannakian}.

A \emph{ribbon structure} on a braided fusion category $\CC$ equipped with a braiding $\beta$ is a natural isomorphism $\theta: \id_\CC \to \id_\CC$ of the identity functor satisfying $\theta_{V^{*}} = \theta_V^*$ and
\begin{equation}\label{eq:twist-def}
\theta_{V\ot W} = (\theta_V \ot \theta_W)\circ \b_{W, V} \circ \b_{V, W} 
\end{equation}
for any $V, W \in \CC$. In particular, for any $X \in \irr(\CC)$, $\theta_X$ is equal to a non-zero scalar times $\id_X$. By an abuse of notation, we denote both the scalar and the isomorphism itself by $\theta_X$ for all simple $X$, and we call $\theta_X$ the \emph{(topological) twist} of $X$. 

A braided fusion category $\CC$ with a ribbon structure $\theta$ is called a \emph{ribbon fusion category}, or a \emph{premodular category}. In a premodular category, endomorphisms are equipped with a canonical trace valued in $\BC$. In particular, this leads to well-defined \emph{quantum dimensions} $d_V:=\dim_\CC(V)$ for objects $V\in\CC$, and one has $\dim(\CC)=\sum_{V\in\irr(\CC)}d_V^2$. For details, see \cite{BK01,ENO05}. A premodular category $\CC$ is called \emph{modular} if the underlying braided fusion category is non-degenerate. 

The \emph{T-matrix} of a premodular category $\CC$ is defined to be the diagonal matrix
$$
T_{X,Y} := \theta_X\cdot \delta_{X, Y}\,,\,\quad X,Y\in\irr(\CC).
$$
It is shown by Vafa \cite{Vafa88} that if $\CC$ is a modular category, then the order of its T-matrix, denoted by $\ord(T_\CC)$, is finite. 

Numerical invariants of modular categories can be obtained from quantum dimensions and twists. For example, for any modular category $\CC$ and any integer $n\in\BZ$, the $n$-th \emph{Gauss sum} is defined in \cite{NSW19} as $\tau_n(\CC) = \sum_{X \in \irr(\CC)} d_X^2\theta_X^n$. When $\tau_n \ne 0$, the $n$-th \emph{central charge} is defined to be
\begin{equation}\label{eq:xi-n-def}
\xi_n(\CC) := \frac{\tau_n(\CC)}{|\tau_n(\CC)|}\,,
\end{equation}
where the denominator on the right hand side represents the (complex) absolute value of $\tau_n(\CC)$.

By \cite[Prop.~2.10]{ENO05} and the finiteness of $\ord(T_\CC)$, the complex conjugate of $\tau_n(\CC)$ equals to $\tau_{-n}$ for all $n \in \BZ$. In particular, when $n = 1$, we can also write
\begin{equation}\label{eq:xi-1-general}
\xi_1(\CC) = \frac{\tau_1(\CC)}{\sqrt{\dim(\CC)}}\,.
\end{equation}
These invariants provide insights on modular categories and the Witt group, as is demonstrated in \cite{NSW19, NRWZ20a}.

Finally, we will need the following classification result on pointed braided fusion categories (for details, see \cite[Sec.~8.4]{EGNO}): such categories are parameterized by pairs $(A,Q)$ and denoted by $\CC(A, Q)$, where $A$ is a finite abelian group and $Q:A \to \BC^\times$ is a quadratic form on $A$. Simple objects in $\CC(A, Q)$ are in one-to-one correspondence to the elements of $A$. Moreover, $\CC(A, Q)$ is a ribbon fusion category whose ribbon structure is determined by the twists $\theta_a = Q(a)$ for $a \in A = \irr(\CC(\AA, Q))$, and it is modular if and only if the map $(a,b)\mapsto \frac{Q(a+b)}{Q(a)Q(b)}$ is a non-degenerate bicharacter on $A$.

An important example of pointed modular categories is the \emph{semion} modular category \cite{RSW09}, denoted by $\Sem$. Using the notation above, we characterize $\Sem$ as $\CC(\BZ/2\BZ, Q)$ with $Q(a)=i^{a^2}$ for $a\in\BZ/2\BZ$. It has two invertible objects with the non-unit object having dimension 1 and twist $i$. In particular, 
\begin{equation}\label{eq:xi-1-sem}
\xi_1(\Sem) = \exp\left(\frac{\pi i}{4}\right)\,.    
\end{equation}

\subsection{Witt group \texorpdfstring{$\WW$}{}}\label{subsec:prelim-Witt}
The concept of Witt equivalence of non-degenerate braided fusion categories is introduced in \cite{DMNO}. More precisely, two non-degenerate braided fusion categories $\CC$ and $\DD$ are Witt equivalent if there exist fusion categories $\AA$ and $\BB$ such that $\CC \boxtimes \ZZ(\AA) \cong \DD \boxtimes \ZZ(\BB)$. We denote the Witt equivalence class of a category $\CC$ by $[\CC]$. Under the Deligne product, Witt equivalence classes form an abelian group denoted by $\WW$. The Witt group $\WW$ of non-degenerate braided fusion categories can be viewed as a generalization of the classical Witt group of non-degenerate quadratic forms on abelian groups (i.e., metric groups), which generate the pointed part of $\WW$ denoted by $\WW_\pt$. The structure of $\WW_\pt$ is well known \cite{DGNO,DMNO}. The study of the Witt group is closely related to the theory of conformal embeddings in the context of affine Kac-Moody algebras \cite{BB,SW} and to phase transitions driven by anyon condensation and symmetry gauging in topological phases of matter \cite{Burnell,BJLP}. 

Numerical invariants are powerful tools in the study of the structure of the Witt group. For example, the first central charge $\xi_1$ (see \eqref{eq:xi-n-def}) is used in the study of conformal embeddings of rational vertex operator algebras associated to affine Lie algebras \cite{DMNO}. Another important Witt invariant, namely, the \emph{signature homomorphism}, is introduced in \cite{NRWZ20a}. Here, we briefly recall the definition as follows. For any fusion category $\CC$, it is well-known that $\FPdim(\CC)$ is totally positive algebraic integer \cite{ENO05}. Therefore, its positive square root, denoted by $\sqrt{\FPdim(\CC)}$, is a totally real algebraic integer, and the signature of $\CC$ is then defined to be the sign of its Galois conjugates. More precisely, let $\GQ$ be the absolute Galois group of $\BQ$, then the signature of $\CC$ is the function
\begin{equation}\label{eq:sgn}
\e_\CC: \GQ \to \{\pm 1\}\,,
\quad
\sigma \mapsto \sgn\left(\sigma(\sqrt{\FPdim(\CC)})\right)\,.
\end{equation}
By Galois theory, it is easy to see that $\e_\CC$ factors through any Galois extension of $\BQ$ containing $\sqrt{\FPdim(\CC)}$. Therefore, in the sections below, we will use large enough fields (instead of $\Bar{\BQ}$) to simplify computations. Let $\UU_2 := \{\pm 1\}^{\GQ}$ be the group of functions from $\GQ$ to $\{\pm 1\}$. Then by \cite[Thm.~3.4]{NRWZ20a}, the signature map induces a well-defined group homomorphism 
\begin{equation}
I: \WW \to \UU_2\,,
\quad
[\CC] \mapsto \e_\CC\,,
\end{equation}
which is the main technical tool in this paper.

It is natural to pursue constructions of new non-degenerate braided fusion categories that are Witt equivalent to a given one, and taking local modules of connected \'etale algebras is one such construction. Let $\CC$ be a braided fusion category with braiding $\beta$. The notion of an \emph{connected \'etale algebra} object $A\in \CC$ is defined in \cite[Sec.~3.1]{DMNO}. It is shown in \cite{DMNO} that if $A \in \CC$ is a connected \'etale algebra, then $\CC_A$, the category of right $A$-modules, is a fusion category. Moreover, according to \cite{Par95,KO02}, $\beta$ descends to $A$-module maps for \emph{local} modules. Here a (right) $A$-module $M$ with $A$-action $\alpha: M \ot A \to M$ is called \emph{local} (or \emph{dyslectic}) if it satisfies $\alpha \circ \beta_{A, M} \circ \beta_{M, A} = \alpha$. As an immediate consequence, $\CC_A^0$, the full subcategory of $\CC_A$ of local modules of $A$, is a braided fusion category. In \cite[Sec.~3]{DMNO}, it is pointed out that if $\CC$ is a ribbon fusion category with ribbon structure $\theta$, then a rigid algebra $A \in \CC$ in the sense of \cite{KO02} is connected \'etale with non-zero quantum dimension if and only if  $\theta_A = \id$. Group symmetry gives rise to examples of the local module construction. More precisely, when $\CC$ contains a Tannakian fusion subcategory $\Rep(G)$ for some finite group $G$, then the regular algebra $A=\Fun(G)$ is a connected \'etale algebra in $\CC$. The corresponding local module $\CC_A^0$ can also be characterized in the context of de-equivariantization \cite{DGNO, DMNO}.

The importance of the local module construction is well illustrated in the combination of Corollary 3.32 and Proposition 5.4 of \cite{DMNO}: for any non-degenerate braided fusion category $\CC$ and any connected \'etale algebra $A \in \CC$, we have
\begin{equation}
\FPdim(\CC_A^0) = \frac{\FPdim(\CC)}{\FPdim(A)^2}
\quad\text{and}\quad [\CC] = [\CC_A^0]\,.
\end{equation}
Therefore, in order to study the Witt class of $\CC$, it suffices to study the smaller category $\CC_A^0$. Moreover, the local module construction is closely related to the conformal embedding in the theory of rational vertex operator algebras, see \cite{DMNO}. 

By taking the local module category of a maximal connected \'etale algebra in a non-degenerate braided fusion category $\CC$, one gets a ``minimal'' representative in its Witt class in the following sense. A non-degenerate braided fusion category is called \emph{completely anisotropic} if it does not contain any connected \'etale algebra other than $\1$. It is shown in \cite[Thm.~5.13]{DMNO} that any Witt class contains a unique (up to braided equivalence) completely anisotropic representative. While specific Witt classes can be difficult to compare in practice, having a completely anisotropic representative is particularly useful. A fusion category $\CC$ is called \emph{simple} if $\CC$ has no non-trivial fusion subcategories. There are examples of completely anisotropic categories which are not simple and vice versa. It is then interesting to consider the categories which are ``minimal'' in both senses, which leads to the concept of \emph{property S} \cite[Sec.~5.4]{DMNO}. A non-degenerate braided fusion category $\CC$ has property S if it is completely anisotropic, simple and unpointed. The subgroup of $\WW$ generated by the categories with property $S$ is denoted by $\WW_{S}$.

\section{Categorical data and local modules of \texorpdfstring{$\so(2r)_{2r}$}{}}
In this section, we set up notations and provide the categorical data. We will also discuss basic properties of the local modules of the quantum group modular categories $\so(2r)_{2r}$.

\subsection{Categorical data of \texorpdfstring{$\so(2r)_{2r}$}{}}\label{sec:lie}
Quantum groups give prominent examples of modular categories whose origin traces back to the dawn of the subject \cite{Dri87, Lus93}. For any simple Lie algebra $\fg$ over $\BC$ and a positive integer $k$ called the \emph{level}, one gets a modular category $\fg_k$ by taking the semisimplification of the tilting module category of the quantum group $U_q(\fg)$ specialized at a root of unity $q$ determined by $\fg$ and $k$. The reader is referred to \cite{BK01, Row05f} for details. 

Let $r\geq 2$ be an integer and $\CC_{r}$ be the quantum group modular category $\so(2r)_{2r}$. In this section, we study the structure of the Witt subgroup generated by these categories using signature homomorphism (see \eqref{eq:sgn}). Necessary data and notation is given below. For more details, see, for example, \cite{Row05f, BK01, Hum72}.

\begin{itemize}
\item 
Orthonormal basis for the inner product space $(\BR^r, (\cdot\mid\cdot))$: $\{e_1, \ldots, e_r\}$.

\item 
The set of positive roots: $\De_+ = \{e_j \pm e_k \mid 1\leq j<k\leq r\}$. Root lattice: $\fQ$. Coroot lattice: $ \fQ^{\vee}$.

\item 
Half sum of positive roots: $\rho=(r-1)e_1+(r-2)e_2\cdots+e_{r-1}$. 

\item 
Fundamental weights: 
$\om_j=\sum_{i=1}^{j}e_i$, $1\leq j\leq r-2$;
$\om_{r-1}=\frac{1}{2}\sum_{i=1}^{r-1}e_i-\frac{1}{2}e_r$, $\om_{r}=\frac{1}{2}\sum_{i=1}^{r}e_i$.

\item
Dual Coxeter number: $h^{\vee}=2r-2$. The set of dominant weights: $\F_+$. Weight lattice: $\fP$. 

\item
The fundamental alcove $\fA_r$ is in one-to-one correspondence with the set of simple objects of $\CC_r$.
\begin{equation}
\label{eq:fund-alc}
\begin{aligned}
\fA_r
=\{\lambda\in \F_+ : (\lambda|e_1+e_2)\leq 2r\}=\irr(\CC_r)\,.
\end{aligned}
\end{equation}

\item
Quantum parameter: $q=\exp\left(\frac{\pi i}{(2r+h^{\vee})}\right)=\exp\left(\frac{\pi i}{4r-2}\right)$. Twist of simple objects:
\begin{equation}\label{eq:q-dim}
\te_{\la}=q^{(\la|\la+2\rho)}\,,\quad\forall\la\in \fA_r\,.
\end{equation}

\item
First central charge:
\begin{equation}\label{eq:xi-1}
    \x_1(\CC_{r})=\exp\left(\frac{\pi i r^2}{4}\right)\,.
\end{equation}
\end{itemize}

In Section \ref{sec:4}, we will need the following auxiliary function. We adopt the standard notation and write the floor and ceiling functions as $x\mapsto\lfloor x\rfloor, x\mapsto\lceil x\rceil$ respectively.

\begin{lem}\label{lem:fdrj}
For any integers $r \ge 1$ and $j \ge 1$, define $\fd_r(j) := |\{\al\in \De_{+}:\ (\al|\rho)=j\}|$. Then
\begin{equation} \label{eq:m} 
\mathfrak{d}_r(j) =
\begin{cases}
    r-\lfloor\frac{j}{2}\rfloor &  \text{ if }1\leq j\leq r-1\,, \\
    r-\lceil\frac{j+1}{2}\rceil &  \text{ if }r\leq j \leq 2r-3\,, \\
    0 & \text{ otherwise}\,.
\end{cases}
\end{equation}
\end{lem}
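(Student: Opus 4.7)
The plan is a direct case analysis on the two families of positive roots. I would split $\De_+$ into $\{e_a - e_b : 1 \le a < b \le r\}$ and $\{e_a + e_b : 1 \le a < b \le r\}$. Since the coefficient of $e_i$ in $\rho$ equals $r - i$ for $1 \le i \le r-1$ and vanishes for $i = r$, both inner products can be written uniformly as
\[
(e_a - e_b \mid \rho) = b - a, \qquad (e_a + e_b \mid \rho) = 2r - a - b.
\]
The first takes values in $\{1, \ldots, r-1\}$, and the number of pairs $(a,b)$ with $b - a = d$ is $r - d$. The second takes values in $\{1, \ldots, 2r-3\}$, and the number of pairs with $a + b = s$ and $1 \le a < b \le r$ is $\lfloor (s-1)/2 \rfloor - \max(1, s - r) + 1$.

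Setting $s = 2r - j$, the condition $s \ge r + 1$ (so that $\max(1, s - r) = s - r$) is equivalent to $j \le r - 1$, which is exactly the cutoff in the statement. For $1 \le j \le r - 1$ both families contribute, and the total simplifies to
\[
(r - j) + \Bigl\lfloor \tfrac{2r - j - 1}{2}\Bigr\rfloor - (r - j) + 1 = \Bigl\lfloor \tfrac{2r - j - 1}{2}\Bigr\rfloor + 1,
\]
which I would reduce to $r - \lfloor j/2 \rfloor$ by examining the two parities of $j$ separately. For $r \le j \le 2r - 3$ the $e_a - e_b$ family no longer contributes, and the remaining count $\lfloor (2r - j - 1)/2 \rfloor$ equals $r - \lceil (j + 1)/2 \rceil$ by the same parity check. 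For $j$ outside $[1, 2r - 3]$ both contributions are empty, so $\mathfrak{d}_r(j) = 0$.

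There is no structural obstacle; the argument is combinatorial bookkeeping. The only mild subtlety, and the reason the formula's cutoff sits precisely at $j = r - 1$, is that this is exactly where $\max(1, s - r)$ switches between its two branches, causing the contribution from $\{e_a + e_b\}$ to transition from $\lfloor (2r-j-1)/2 \rfloor - (r - j) + 1$ to $\lfloor (2r-j-1)/2 \rfloor$. Keeping track of this transition is the only point in the proof where a small computation is required; everything else is immediate from the explicit description of $\De_+$ and $\rho$ for type $D_r$.
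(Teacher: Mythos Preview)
Your proposal is correct and follows essentially the same approach as the paper: both split $\De_+$ into the families $\{e_a-e_b\}$ and $\{e_a+e_b\}$, compute the inner products as $b-a$ and $2r-a-b$, count pairs separately, and combine. The only cosmetic difference is that you package the $e_a+e_b$ count into the single formula $\lfloor(s-1)/2\rfloor-\max(1,s-r)+1$, whereas the paper records its two branches explicitly as functions $\mathfrak{l}^\pm_r$ before summing.
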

\begin{proof}
Let $\alpha\in\Delta_+$ be a positive root, there are two cases. If $\al=e_a+e_b$ for some $1\leq a<b\leq r$, $(\al|\rho)=2r-(a+b)$, then $1\leq(\al|\rho)\leq 2r-3$; if $\al=e_a-e_b$ for some $1\leq a<b\leq r$, $(\al|\rho)=b-a$, then $1\leq b-a\leq r-1$. Therefore, $\fd_r(j) = 0$ for all $j > 2r-3$.

Define 
\[\begin{split}
&\mathfrak{l}^+_r(i)=|\{(a,b)\in \BZ^2:1\leq a<b\leq r,\ a+b=i\}|\,,\\
&\mathfrak{l}^-_r(i)=|\{(a,b)\in \BZ^2:1\leq a<b\leq r,\ b-a=i\}|\,.
\end{split}\] 
For $1\leq j\leq 2r-3$, we have: 
$$
\mathfrak{d}_r(j)=\mathfrak{l}^+_r(2r-j)+\mathfrak{l}^-_r(j)
$$
It's clear by induction that
\[ 
\mathfrak{l}^+_r(j) =
  \begin{cases}
   \lfloor\frac{j-1}{2}\rfloor\quad & 3\leq j\leq r\,,\\
   \lceil\frac{2r-j}{2}\rceil\quad & r+1\leq j\leq 2r-1\,,    
  \end{cases}
\]
and
\[
\mathfrak{l}^-_r(j)=
\begin{cases}
 r-j\quad & 1\leq j\leq r-1\,,\\
 0\quad & r\leq j\leq 2r-3\,.
\end{cases}
\]

Therefore, if $1 \le j \le r-1$, then $\fd_r(j) = r-j+\lceil\frac{j}{2}\rceil=r-\lfloor\frac{j}{2}\rfloor$; if $r \le j \le 2r-3$, then we have $\fd_r(j) = \lfloor\frac{2r-j-1}{2}\rfloor=r-\lceil\frac{j+1}{2}\rceil$.
\end{proof}

\begin{lem}\label{Lem:odd in m}
For any integer $r\ge 1$, let $S_r=\{1\leq j\leq 2r-3 :\ \fd_r(j) \cdot j\equiv 1\pmod{2} \}$. Then $\mid S_r\mid$ is even.
\end{lem}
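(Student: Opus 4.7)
The plan is to observe that $|S_r|\bmod 2$ can be read off from the unweighted sum $\sum_j j\,\mathfrak{d}_r(j)$, rather than analyzing the piecewise formula of Lemma~\ref{lem:fdrj} case by case, and then to evaluate this sum using the Lie-theoretic interpretation of $\mathfrak{d}_r$. The product $j\cdot\mathfrak{d}_r(j)$ is odd if and only if both factors are odd, i.e.\ $j\in S_r$. Since the parity of a sum of integers equals the number of odd summands modulo $2$, we immediately get
\[
|S_r|\ \equiv\ \sum_{j=1}^{2r-3} j\,\mathfrak{d}_r(j) \pmod{2}.
\]

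Next I would unpack the definition $\mathfrak{d}_r(j)=|\{\alpha\in\Delta_+:(\alpha\mid\rho)=j\}|$ to rewrite the weighted sum as a sum over positive roots. Grouping positive roots by the value $(\alpha\mid\rho)$ and using the standard identity $\sum_{\alpha\in\Delta_+}\alpha=2\rho$, one obtains
\[
\sum_{j=1}^{2r-3} j\,\mathfrak{d}_r(j)
\ =\ \sum_{\alpha\in\Delta_+}(\alpha\mid\rho)
\ =\ \Bigl(\sum_{\alpha\in\Delta_+}\alpha\ \Big|\ \rho\Bigr)
\ =\ 2(\rho\mid\rho).
\]

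To finish, I would note that $(\rho\mid\rho)=\sum_{i=1}^{r-1}(r-i)^2=\sum_{k=1}^{r-1}k^2$ is an integer, so $2(\rho\mid\rho)$ is manifestly an even integer. Combined with the first display this gives $|S_r|\equiv 0\pmod 2$, as required. As a sanity check, the small cases $r=2,3,4$ (where the positive-root contributions to $\mathfrak{d}_r(j)$ are easy to list explicitly) already produce $|S_r|\in\{0,2,2\}$.

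The only mildly delicate step is the identification of the weighted sum with $(2\rho\mid\rho)$; this is really just rewriting $\sum_\alpha (\alpha\mid\rho)$ by linearity, but it is the key observation that bypasses a tedious four-way case analysis on $r\bmod 4$ coming from the explicit formula in Lemma~\ref{lem:fdrj}. Everything else is bookkeeping with parities.
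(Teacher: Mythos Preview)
Your argument is correct and is genuinely different from the paper's. The paper proceeds by a four-way case split on $r\bmod 4$, in each case writing down $S_r$ explicitly from the piecewise formula of Lemma~\ref{lem:fdrj} and counting. Your route is more conceptual: the parity trick $|S_r|\equiv\sum_j j\,\fd_r(j)\pmod 2$ together with the root-system identity $\sum_{\alpha\in\Delta_+}\alpha=2\rho$ reduces everything to the single observation that $(\rho\mid\rho)\in\BZ$. This avoids any case analysis and in fact never uses the explicit formula for $\fd_r(j)$ beyond knowing that it vanishes outside $1\le j\le 2r-3$ (so that the finite sum really captures all of $\sum_{\alpha\in\Delta_+}(\alpha\mid\rho)$). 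The paper's approach has the minor advantage of producing an explicit description of $S_r$, but that description is not used elsewhere; your argument is shorter, cleaner, and would carry over verbatim to any root system with $(\rho\mid\rho)\in\BZ$.
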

\begin{proof}
From \eqref{eq:m}, for any $k \ge 1$ we have
\[
S_r=
\begin{cases}
\{4m+1,4n+3\ :\ 0\leq m\leq k-1,\  k\leq n\leq 2k-1\} & \text{ if } r=4k-3\,,\\
\{4m+3,4n+1\ :\ 0\leq m\leq k-1,\  k+1\leq n\leq 2k\} & \text{ if } r=4k-2\,,\\
\{4m+1,4n+3\ :\ 0\leq m\leq k,\  k\leq n\leq 2k\} & \text{ if } r=4k-1\,,\\
\{4m+3,4n+1\ :\ 0\leq m\leq k-1,\  k\leq n\leq 2k-1\} & \text{ if } r=4k\,.
\end{cases}
\]
In any of the cases above, $\mid S_r\mid$ is even, so we are done.
\end{proof}

For simplicity, we adopt the following conventions. For any positive integer $n$, we set $\zeta_n := \exp\left(\frac{2\pi i}{n}\right)$, and we use $\BQ_n$ to denote the cyclotomic field $\BQ(\zeta_n)$. 
For any $m \in \BN$ and $k$ coprime to $m$, we use $\sm_k$ to denote the element in $\gal(\BQ_m /\BQ)$ sending $\zeta_m$ to $\zeta^k_m$. If $m$ a positive odd integer, and $a \in \BZ$, then we denote the \emph{Jacobi symbol} of $a$ modulo $m$ by $\jac{a}{m}$. 

\begin{lem}\label{lem:sin}
Let $r$ and $u$ be positive integers such that such that $8(2r-1)\mid u$. Then for any $k\in\BZ$ with $\gcd(k,4r-2)=1$, and any $j\in\BZ$, we have
\begin{equation}\label{eq:Sin}
\sm_k\left(\sin\left(\frac{j\pi}{4r-2}\right)\right)=\jac{-1}{k}\sin\left(\frac{kj\pi}{4r-2}\right)\,.
\end{equation}
in the cyclotomic field $\BQ_u$.
\end{lem}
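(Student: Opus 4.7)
The plan is to express the sine in exponential form and track the Galois action directly on the resulting roots of unity.

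First, since $4r-2$ is even, the coprimality condition $\gcd(k, 4r-2) = 1$ forces $k$ to be odd. Because $\sm_k$ depends only on the residue of $k$ modulo $u$, I may replace $k$ by a positive odd representative, so that the Jacobi symbol $\jac{-1}{k}$ is defined; by the first supplement to quadratic reciprocity, $\jac{-1}{k} = (-1)^{(k-1)/2}$.

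Next, using $\sin\theta = (e^{i\theta} - e^{-i\theta})/(2i)$ with $\theta = j\pi/(4r-2)$, and noting that $e^{i\pi/(4r-2)} = \zeta_{8r-4}$ while $i = \zeta_{4}$, I would write
\[
\sin\left(\frac{j\pi}{4r-2}\right) = \frac{\zeta_{8r-4}^{j} - \zeta_{8r-4}^{-j}}{2\zeta_{4}}\,.
\]
The assumption $8(2r-1) \mid u$ implies $8r-4 = 4(2r-1) \mid u$ and $4 \mid u$, so both $\zeta_{8r-4}$ and $\zeta_{4}$ lie in $\BQ_u$, and $\sm_k$ acts on each by raising to the $k$-th power.

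Applying $\sm_k$ then sends the numerator to $\zeta_{8r-4}^{kj} - \zeta_{8r-4}^{-kj}$, which is precisely the numerator of $2\zeta_{4}\sin(kj\pi/(4r-2))$, while the denominator becomes $2\zeta_{4}^{k}$. The Galois action therefore introduces the scalar factor $\zeta_{4}/\zeta_{4}^{k} = i^{1-k}$. Since $k$ is odd, $1-k$ is even and $i^{1-k} = (-1)^{(k-1)/2} = \jac{-1}{k}$, which matches the desired factor and completes the identification.

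The calculation is essentially direct once everything is rewritten in exponentials, and there is no serious conceptual obstacle; the only bookkeeping issue is confirming that the relevant roots of unity lie in $\BQ_u$, which is exactly what the divisibility hypothesis guarantees. Indeed, $8r-4 \mid u$ alone would already suffice for this lemma, so the stronger condition $8(2r-1) \mid u$ is presumably imposed to accommodate subsequent applications in the paper where further roots of unity must be available.
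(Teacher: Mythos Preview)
Your proof is correct and follows essentially the same route as the paper: write the sine in exponential form using roots of unity that lie in $\BQ_u$ by the divisibility hypothesis, then use that $\sm_k(i)=i^k=\jac{-1}{k}\,i$ for odd $k$. The paper's version is terser (and in fact writes $\zeta_{4r-2}$ where $\zeta_{8r-4}$ is meant), but the argument is the same; your observation that already $8r-4\mid u$ suffices is also correct.
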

\begin{proof}
The assumption that $8(2r-1)\mid u$ implies both $\zeta_{4r-2}$ and $i$ are in $\mathbb{Q}_{u}$. Since $k$ is odd, so $\sm_k(i)=\jac{-1}{k} i$. Now the lemma follows from $\sin(\frac{j\pi}{4r-2}) = \frac{\zeta^j_{4r-2}-\zeta^{-j}_{4r-2}}{2i}$.
\end{proof}


\begin{lem} \label{lem:field}
Let $T_r$ be the T-matrix of $\CC_r$, and $N_r=\ord(T_r)$. For any positive integer $r \ge 2$, we have $N_r=2^s(2r-1)$ for some integer $0 \le s \le 4$, and $\sqrt{\dim(\CC_r)}\in \rational_{N_r}$.
\end{lem}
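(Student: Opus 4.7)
The plan is to sandwich $N_r$ by the divisibility chain $(2r-1)\mid N_r\mid 16(2r-1)$, and then extract the cyclotomic claim from the Ng--Schauenburg congruence subgroup theorem.

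For the upper bound, I would use the explicit formula \eqref{eq:q-dim}, which gives $\theta_\lambda=\exp\!\bigl(\pi i(\lambda|\lambda+2\rho)/(4r-2)\bigr)$. Every weight $\lambda\in\fP$ has coordinates in $\tfrac{1}{2}\BZ$, so $(\lambda|\lambda)\in\tfrac{1}{4}\BZ$, and since $\rho$ has integer coordinates we also have $2(\lambda|\rho)\in\BZ$. Consequently $(\lambda|\lambda+2\rho)\in\tfrac{1}{4}\BZ$ for every $\lambda\in\fA_r$, and raising $\theta_\lambda$ to the power $16(2r-1)=8(4r-2)$ converts the exponent to $8\pi i(\lambda|\lambda+2\rho)\in 2\pi i\BZ$. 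Hence $N_r\mid 16(2r-1)$.

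For the lower bound, I would exhibit a single simple object whose twist has order $2r-1$. The highest root $\lambda_0=e_1+e_2$ lies in $\fA_r$ for every $r\ge 2$, and a short computation using $(\lambda_0|\rho)=2r-3$ yields $\theta_{\lambda_0}=\exp\!\bigl(2\pi i(r-1)/(2r-1)\bigr)$. Since $\gcd(r-1,2r-1)=1$, this twist has order exactly $2r-1$, proving $(2r-1)\mid N_r$. Combining both divisibilities with the fact that $2r-1$ is odd forces $N_r=2^s(2r-1)$ for some integer $0\le s\le 4$.

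For the cyclotomic part, I would invoke the Ng--Schauenburg congruence subgroup theorem, which asserts that for any modular category $\CC$ all entries of the normalized modular $S$-matrix lie in $\BQ_{\ord(T_\CC)}$. Because $\CC_r$ is pseudounitary, the entry $S_{00}=1/\sqrt{\dim(\CC_r)}$ is a positive real number lying in $\BQ_{N_r}$, so $\sqrt{\dim(\CC_r)}=1/S_{00}\in\BQ_{N_r}$. The only delicate step in the whole argument is the inner-product bookkeeping in the upper bound: one must verify that the spinor fundamental weights $\omega_{r-1},\omega_r$, whose coordinates are all half-integers, do not introduce denominators beyond $4$ in $(\lambda|\lambda+2\rho)$. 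The lower bound reduces to a single explicit computation, and the cyclotomic claim follows directly from a standard general theorem.
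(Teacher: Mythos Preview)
Your divisibility sandwich for $N_r$ is correct. The upper bound via $(\lambda\mid\lambda+2\rho)\in\tfrac14\BZ$ is exactly the paper's argument, and your lower bound using $\lambda_0=e_1+e_2$ works just as well as the paper's choice $2e_1$ (both twists have order $2r-1$).

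The cyclotomic step, however, has a genuine gap. The assertion that the normalized $S$-matrix of an arbitrary modular category has entries in $\BQ_{\ord(T)}$ is false: for the semion category $\Sem$ one has $\ord(T)=4$, yet $s_{00}=1/\sqrt{2}\notin\BQ_4=\BQ(i)$. The Ng--Schauenburg congruence subgroup theorem says only that the \emph{projective} $\SL_2(\BZ)$-kernel has level $N$; it does not force $s_{00}\in\BQ_N$. What one does get in general (e.g.\ from \cite[Thm.~5.5]{NS07}) is $d_\lambda\in\BQ_{N_r}$, hence $\tau_1(\CC_r)\in\BQ_{N_r}$; but then $\sqrt{\dim(\CC_r)}=\tau_1(\CC_r)/\xi_1(\CC_r)$ lies in $\BQ_{N_r}$ only if $\xi_1(\CC_r)$ does, and this last containment is not automatic.

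The paper closes this gap by invoking the explicit formula $\xi_1(\CC_r)=\exp(\pi i r^2/4)$. For $r$ even this is $\pm 1\in\BQ\subset\BQ_{N_r}$. For $r$ odd it is a primitive $8$th root of unity, and here the paper needs the additional input $16\mid N_r$, obtained from the twist $\theta_{\omega_r}=\exp(r\pi i/8)$. Your choice of witness $e_1+e_2$ only yields $(2r-1)\mid N_r$, which by itself is not enough to place $\xi_1(\CC_r)$ inside $\BQ_{N_r}$ when $r$ is odd. So both the explicit central-charge computation and, in the odd case, the second witness $\omega_r$ are doing real work that your proposal omits.
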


\begin{proof}
Since $2e_1,\ \om_r\in \fA_r$ (see \eqref{eq:fund-alc}), by \eqref{eq:q-dim} we have 
$$
\begin{aligned}
&\te_{2e_1}=q^{(2e_1|2e_1+2\rho)}=q^{4r}
=-\exp\left(\frac{\pi i}{2r-1}\right)\,,\\
&\te_{\om_r}=q^{(\om_r|\om_r+2\rho)}=q^{\frac{(2r-1)r}{4}}=\exp\left(\frac{r\pi i}{8}\right)\,.
\end{aligned}
$$
Moreover, since for any $\la\in \fA_r$, $(\la|\la+2\rho)\in \frac{1}{4}\BZ$, $\theta_{\la}=q^{(\la|\la+2\rho)}$ is $16(2r-1)$th-root of unity.
Therefore, if $r$ is even, then $2r-1\mid N_r \mid 16(2r-1)$. When $r$ is odd, we have $16(2r-1)\mid N_r \mid 16(2r-1)$, and so $N_r=16(2r-1)$. This proves the first part of the lemma.

By \cite[Thm.~5.5]{NS07}, $d_\la \in \BQ_{N_r}$ for any $\la \in \fA_r$, which implies $\tau_1(\CC_r) \in \BQ_{N_r}$. Moreover, by \eqref{eq:xi-1}, when $r$ is even, $\x_1(\CC_r)=\pm1\in \BZ\subset\BQ_{N_r}$, and when $r$ is odd, we have $\x_1(\CC_r)=\exp\left(\frac{\pi i}{4}\right)\in\BQ_{N_r}$. Therefore, in light of \eqref{eq:xi-1-general}, $\sqrt{\dim(\CC_r)} = \tau_1(\CC_r)/\xi_1(\CC_r) \in \BQ_{N_{r}}$.
\end{proof}

\begin{rmk}
In the rest of the paper, the $N_r$'s are used to bound the conductors of the cyclotomic fields in which we perform explicit computations. The key information to extract from the above lemma is that $(2r-1)$ is the largest odd factor of $N_r$, while the exact value of $N_r$ is not needed for our purpose.
\end{rmk}

\subsection{Local modules of \texorpdfstring{$\CC_r$}{}} 
In order to study the Witt subgroup generated by $\CC_r$, it is natural to find connected \'etale algebras in it, and to study the Witt class of the corresponding local module category (or condensation), as is explained in Section \ref{subsec:prelim-Witt}. It turns out that $\CC_r$ contains a Tannakian symmetric fusion subcategory, and the corresponding local module admits further decomposition when $r$ is odd.

Details on the above discussions are provided in the following lemma. Recall that a fusion category is unpointed if it has only 1 invertible object, and the semion modular categories is defined in Section \ref{subsec:BFC}.

For any fusion category $\CC$ and object $X \in \CC$, we denote by $\langle X \rangle$ the full fusion subcategory of $\CC$ generated by all the subobjects of $X$. If $W$ is a collection of Witt classes or Witt subgroups, then $\langle W \rangle$ is the subgroup of $\WW$ generated by $W$.

\begin{lem}\label{lem:de-gauging}
For any $r\ge 1$, we have 
\begin{equation*}
\begin{cases}
(\CC_r)_\pt \supset \Rep(\BZ/2\BZ)\,, & \quad \text{if } r \text{ is odd}\,;\\
(\CC_r)_\pt \cong \Rep(\BZ/2\BZ\times\BZ/2\BZ)\,, & \quad \text{if } r \text{ is even}\,.
\end{cases}
\end{equation*}
Let $A_r = \Fun(\BZ/2\BZ)$ for odd $r$ and $A_r = \Fun(\BZ/2\BZ\times\BZ/2\BZ)$ for even $r$ be the corresponding connected \'etale algebras, then we have the following decomposition for the local modules 
\begin{equation*}
(\CC_r)_{A_{r}}^{0}\cong
\begin{cases}
\Sem\bt\DD_{r} \,, & \quad \text{if } r \text{ is odd}\,;\\
\DD_{r}\,, & \quad \text{if } r \text{ is even}\,,
\end{cases}
\end{equation*}
where $\DD_r$ is an unpointed modular category.
\end{lem}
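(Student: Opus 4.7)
The plan is to establish the lemma in two phases. First, determine the pointed subcategory $(\CC_r)_\pt$ explicitly. Second, analyze the local module category with respect to the appropriate Tannakian algebra.

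For the pointed part, I would enumerate the invertible simples of $\CC_r=\so(2r)_{2r}$: they are labeled by the alcove weights $\{0,\,2re_1,\,2r\om_{r-1},\,2r\om_r\}$, namely the level-$2r$ multiples of the three minuscule fundamental weights together with the unit. Each lies in $\fA_r$, as $(\la|e_1+e_2)\le 2r$ is easily verified in each case from \eqref{eq:fund-alc}. Their twists follow from \eqref{eq:q-dim}: a direct computation using $q=\exp(\pi i/(4r-2))$ gives
\[
\te_{2re_1}=q^{4r(2r-1)}=\exp(2\pi i r)=1,\qquad \te_{2r\om_{r-1}}=\te_{2r\om_r}=q^{r^2(2r-1)}=\exp\!\left(\frac{\pi i r^2}{2}\right).
\]
When $r$ is even, $r^2/2$ is an even integer, so all three non-trivial twists equal $1$, the induced bicharacter on the group of invertibles is trivial, and $(\CC_r)_\pt\cong\Rep(\BZ/2\BZ\times\BZ/2\BZ)$. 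When $r$ is odd, $r^2\equiv 1\pmod 4$ gives $\te_{2r\om_{r-1}}=\te_{2r\om_r}=i$ while $\te_{2re_1}=1$; hence $(\CC_r)_\pt\cong\CC(\BZ/4\BZ,Q)$ with $Q(a)=i^{a^2}$, and the order-$2$ subgroup generated by $2re_1$ is Tannakian, yielding $\Rep(\BZ/2\BZ)\subset(\CC_r)_\pt$.

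Next, I would form the local module category $(\CC_r)^0_{A_r}$ for $A_r=\Fun$ of the group, which is modular by \cite[Sec.~3]{DMNO}. For $r$ even, since the entire pointed part is condensed by the Tannakian algebra, the pointed subcategory of $(\CC_r)^0_{A_r}$ is trivial, yielding $(\CC_r)^0_{A_r}\cong\DD_r$ unpointed. For $r$ odd, after condensing $\Rep(\BZ/2\BZ)$ the two invertibles of twist $i$ in $(\CC_r)_\pt$ fuse to a single invertible in the de-equivariantization whose induced quadratic form matches that of $\Sem$. Since $\Sem$ is non-degenerate, M\"uger's decomposition theorem splits it off as a tensor factor: $(\CC_r)^0_{A_r}\cong\Sem\bt\DD_r$ where $\DD_r$ is the centralizer of $\Sem$ inside $(\CC_r)^0_{A_r}$.

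The main obstacle is showing that $\DD_r$ is unpointed in both cases, i.e., that no additional invertibles arise in $(\CC_r)^0_{A_r}$ beyond those already identified. Equivalently, one must verify that the fusion action of $(\CC_r)_\pt$ is free on the non-invertible simples, so that every invertible in the de-equivariantization descends from one in $(\CC_r)_\pt$. I expect this to follow from examining the action of the minuscule weights on $\fA_r$: a fixed point $\la\cong g\ot\la$ for non-trivial invertible $g$ would impose a combinatorial constraint on alcove weights that can be ruled out case by case, or alternatively by computing $\FPdim(\DD_r)=\FPdim(\CC_r)/\FPdim(A_r)^2$ and matching it against the known number of invertibles to confirm no new ones appear.
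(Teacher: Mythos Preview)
Your overall architecture matches the paper: identify the four invertibles and their twists via \eqref{eq:q-dim}, recognize the Tannakian subcategory, and for odd $r$ split off $\Sem$ by M\"uger's double-centralizer theorem. The gap is entirely in the unpointedness of $\DD_r$.

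The assertion ``since the entire pointed part is condensed, the pointed subcategory of $(\CC_r)^0_{A_r}$ is trivial'' is a non-sequitur: de-equivariantization can produce \emph{new} invertibles when the free module $F(\la)$ of a non-invertible simple splits. Your proposed remedy---that the tensor action of $(\CC_r)_\pt$ on $\irr(\CC_r)$ is free---is false. The simple current $\la_2=2r\om_1$ acts on affine Dynkin labels by $a_0\leftrightarrow a_1$, $a_{r-1}\leftrightarrow a_r$; the non-invertible weight $r\om_1\in\fA_r$ has $a_0=a_1=r$ and $a_j=0$ for $j\ge 2$, hence is fixed, so $\la_2\otimes (r\om_1)\cong r\om_1$. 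The combinatorial constraint you hope to rule out is therefore satisfied, not violated. Your alternative---matching $\FPdim(\DD_r)$ against a count of invertibles---is not an argument either, since the global FP dimension does not bound the order of the pointed subcategory.

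The paper closes the gap with a different idea. For any invertible $X$ in $(\CC_r)^0_{A_r}$, the forgetful image $G(X)\in\CC_r$ satisfies $\dim_{\CC_r}(G(X))=\dim(A_r)\in\BZ$, so every simple summand of $G(X)$ has integral quantum dimension. The key external input is the result of \cite{Sch20} that in $\so(2r)_{2r}$ the \emph{only} simples of integral dimension are the four invertibles. Hence $G(X)\in(\CC_r)_\pt$, and since $X\hookrightarrow F(G(X))$ one obtains $X\in F((\CC_r)_\pt)$, which is $\langle F(\la_1)\rangle\cong\Sem$ for odd $r$ and $\langle F(\1)\rangle$ for even $r$. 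What excludes new invertibles is thus not freeness of the simple-current action but the absence of non-invertible integral-dimensional simples in $\CC_r$.
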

\begin{proof}
Let $\la_1=2r\om_{r-1}$, $\la_2=2r\om_1$, and $\la_3=2r\om_r$. One can check easily that $\la_j \in \fA_r$ for $1\leq j\leq 3$, and by \cite{Saw02,Saw05}, these are all the nontrivial invertible objects in $\CC_r$. Moreover, when $r$ is odd, $\la_1^{\ot j} = \la_j$ for $j = 1, 2, 3$, and when $r$ is even, $\la_i^2=\1$ and $\la_1\otimes \la_2=\la_3$ (see, for example, \cite[Lem.~2]{Saw02}).

Therefore, $(\CC_r)_{\pt} \cong \CC(\BZ/4\BZ,Q_1)$ for odd $r$, $(\CC_r)_{\pt} \cong \CC(\BZ/2\BZ\times \BZ/2\BZ,Q_2)$ for even $r$, where $Q_1$ and $Q_2$ are quadratic forms determined by the following twist values:
\begin{equation}
\begin{split}
&\te_{\la_1}
=q^{(4r\om_{r-1}-2r\om_r|4r\om_{r-1}-2r\om_r+2\rho)}
=q^{2r^2(r-1)+r^2}=\exp\left(\frac{r^2}{2}\pi i\right)\,,\\
&\te_{\la_2}
=q^{(2r\om_1|2r\om_1+2\rho)}
=q^{2r(4r-2)}=\exp\left(2r\pi i\right)=1\,,\\
&\te_{\la_3}
=q^{(2r\om_r|2r\om_r+2\rho)}
=q^{r^2(2r-1)}
=\exp\left(\frac{r^2}{2}\pi i\right)\,.
\end{split}
\end{equation}

When $r$ is odd, the fusion subcategory of $\CC_r$ generated by the object $\la_2$ is equivalent to the symmetric fusion category $\Rep(\BZ/2\BZ)$, which contains the connected \'etale algebra $A_r = \1 \oplus \la_2 = \Fun(\BZ/2\BZ)$. Since $\la_1 \ot \la_2 = \la_3$, so by \eqref{eq:twist-def}, we have 
$$
\theta_{\la_{3}} \cdot \id_{\la_{1}\ot \la_{2}}
=
\theta_{\la_{3}} \cdot \id_{\la_{3}}
= 
\theta_{\la_{1} \ot \la_{2}} 
= 
\beta_{\la_{2},\la_{1}}\circ \beta_{\la_{1}, \la_{2}} \circ (\theta_{\la_{1}}\cdot \id_{\la_{1}} \ot \theta_{\la_2} \cdot \id_{\la_{2}})\,,
$$
where for $j=1, 2, 3$, $\theta_{\la_{j}}$ refers to the twist values above, and $\theta_{\la_{1}\ot\la_{2}}$ denotes the twist isomorphism. Therefore, we have $\beta_{\la_{2},\la_{1}} \circ \beta_{\la_{1},\la_{2}} = \id_{\la_{1}\ot\ld_{2}}$, so $\la_1$ is in the centralizer of $\Rep(\BZ/2\BZ)$. Similarly, we have $\la_3\in\Rep(\BZ/2\BZ)'$.

Let $F:\CC_r \to (\CC_r)_{A_{r}}$, $X \mapsto X\ot A_{r}$ be the free module functor, which is a monoidal functor (see \cite[Sec.~3.3]{DMNO}). Let $V := F(\la_1)$, then $V$ is of order 2. By the above discussions, we have $V \in (\CC_r)_{A_{r}}^0 = (\CC_r)_{\BZ/2\BZ}^{0}$. Moreover, by \cite[Thm.~1.8]{KO02}, we have
$$\dim_{(\CC_{r})_{A_{r}}^{0}}(V) = \dim_{\CC_{r}}(\la_1) = 1\,,$$ 
and by \cite[Thm.~1.17]{KO02}, $\theta_V = \theta_{\la_{1}} = i$. Therefore, $V$ generates a modular subcategory in $(\CC_r)_{A_{r}}^{0}$ which is equivalent to $\Sem$. Thus, by the double centralizer theorem \cite[Thm.~4.2]{Mug03S}, we have $(\CC_r)^0_{A_{r}} \cong \Sem\bt\DD_r$, where $\DD_r$ is the centralizer of $\Sem$ in $(\CC_r)^0_{A_{r}}$. In particular, $\DD_r \cap \Sem = \langle F(\1) \rangle$.

When $r$ is even, $\theta_{\la_{j}} = 1$ for $j = 1, 2, 3$, so \eqref{eq:twist-def} implies that $(\CC_r)_{\pt}$ is equivalent to the symmetric fusion category $\Rep(\BZ/2\BZ\times\BZ/2\BZ)$, which contains the connected \'etale algebra $A_r = \1 \oplus \la_1 \oplus \la_2 \oplus \la_3 = \Fun(\BZ/2\BZ\times\BZ/2\BZ)$. In this case, we can simply set $\DD_r := (\CC_r)_{A_r}^0$. 

It remains to show that $\DD_r$ is unpointed for any $r$. Consider the forgetful functor $G: (\CC_r)_{A_r} \to \CC_r$, which is the adjoint of $F$ (see \cite{KO02}). On the one hand, by \cite[Thm.~1.18]{KO02}, any $X \in \irr((\DD_r)_{\pt})$ must satisfy 
$\dim_{\CC_{r}}(G(X)) = \dim_{\CC_{r}}(A_r) \in \BZ$. On the other hand, by \cite{Sch20}, all the objects in $\CC_r$ with integral dimension are contained in $(\CC_r)_\pt$. Moreover, it is easy to see that any $X \in (\CC_r)_{A_{r}}$ is a subobject of $F(G(X))$. Therefore, for any $X \in \irr((\DD_r)_{\pt})$, we have $X \in F((\CC_r)_{\pt})$. 

By the above discussions, when $r$ is odd, $F((\CC_r)_\pt) = \langle F(\la_1) \rangle \cong \Sem$; when $r$ is even, $F((\CC_r)_\pt) = \langle F(\1) \rangle$ (note that $F(\1)$ is the tensor unit of $(\CC_r)_{A_{r}}^0$). Therefore, $\{F(\1)\} \subset \irr((\DD_r)_\pt) \subset \irr(F((\CC_{r})_\pt)\cap \DD_r) = \{ F(\1)\}$.
\end{proof}

In particular, by \cite{DrGNO}, Lemma \ref{lem:de-gauging} implies that $\FPdim(\CC_r)=8\FPdim(\DD_r)$ if $r$ is odd, and $\FPdim(\CC_r)=16\FPdim(\DD_r)$ if $r$ is even.
Moreover, we immediately see that
\begin{equation}\label{eq:SemD}
[\CC_r] =
\begin{cases}
[\Sem\bt\DD_r] & \text{if }r\text{ is odd}\,, \\
[\DD_r]   & \text{if }r\text{ is even}\,,
\end{cases}
\end{equation}
Now the conformal embedding $\so(m)_n\times \so(n)_m \subset \so(mn)_1$ (see, for example, \cite{DMNO}) implies that 
\[
[\CC_r]^2 =\begin{cases}
[\Sem]^2 & \text{if }r\text{ is odd}\,, \\
[\Vec]   & \text{if }r\text{ is even}\,,
\end{cases}
\]
and so for any $r \ge 1$, we have
\begin{equation}\label{eq:ord2}
[\DD_r]^2 = [\Vec]\,.
\end{equation}

\section{The Witt subgroup generated by \texorpdfstring{$\DD_r$}{}}\label{sec:4}

In this section, we study the Witt subgroup generated by $[\DD_r]$ using the signature homomorphism \eqref{eq:sgn}. By \cite[Lem.~2.4]{Sch18}, $\DD_r$ is pseudounitary, i.e., $\dim(\DD_r) = \FPdim(\DD_r)$, which is a quotient of $\dim(\CC_r)$ by 8 or 16 (see above). For simplicity, we set 
\begin{equation}
    D_r := \sqrt{\dim(\DD_r)} \quad\text{(positive square root)}\,.
\end{equation}
To compute the signature, we will combine the lemmas in Section \ref{sec:lie} with the well-known formulae of $\sqrt{\dim(\CC_r)}$ in terms of trigonometric functions \cite[Thm.~3.3.20]{BK01}. (Note that in \cite[Thm.~3]{Coq10}, a formula for the global dimension is provided, but to compute the signature, one needs to take the square root). As a reminder, the definition of the function $\fd_{r}(j)$ can be found in Lemma \ref{lem:fdrj}.

\begin{lem}\label{lem:Dr-odd}
Let $r \ge 1$ be odd. Then $D_r\in \BQ_{8N_r}$ and
\begin{equation} \label{eq:D}
D_r=Y\sqrt{2r-1}\left(\prod_{j=1}^{2r-3}(\sin\frac{j\pi}{4r-2})^{\mathfrak{d}_r(j)}\right)^{-1}\,,
\end{equation}
where $Y= 2^{\frac{-2r^2+3r+1}{2}} \cdot (2r-1)^{\frac{r-1}{2}} \in \BQ$.
\end{lem}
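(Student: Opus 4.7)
The plan is to substitute the Kac–Peterson formula for $\sqrt{\dim(\CC_r)}$ from \cite[Thm.~3.3.20]{BK01} and pass to $D_r$ via Lemma \ref{lem:de-gauging}. Since $\CC_r$ and $\DD_r$ are pseudounitary (the latter by \cite[Lem.~2.4]{Sch18}, as noted at the start of this section), the relation $\FPdim(\CC_r)=8\FPdim(\DD_r)$ valid for odd $r$ lifts to $\dim(\CC_r)=8\dim(\DD_r)$, so it suffices to compute $\sqrt{\dim(\CC_r)}$ and divide by $2\sqrt 2$.

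The Kac–Peterson formula presents $\sqrt{\dim(\CC_r)}$ as an explicit rational multiple of
$$
\frac{(4r-2)^{r/2}}{\prod_{\alpha\in\De_+}2\sin\bigl(\pi(\rho|\alpha)/(4r-2)\bigr)},
$$
where the rational constant is determined by the order of the center of $\operatorname{Spin}(2r)$. I would reindex the denominator by $j=(\rho|\alpha)\in\{1,\dots,2r-3\}$ and invoke Lemma \ref{lem:fdrj} to rewrite it as
$$
\prod_{\alpha\in\De_+}2\sin\frac{\pi(\rho|\alpha)}{4r-2}=2^{r(r-1)}\prod_{j=1}^{2r-3}\Bigl(\sin\frac{j\pi}{4r-2}\Bigr)^{\fd_r(j)}.
$$
Next I would write $(4r-2)^{r/2}=2^{r/2}(2r-1)^{(r-1)/2}\sqrt{2r-1}$ to isolate the factor $\sqrt{2r-1}$ appearing in the statement, and then collect the remaining powers of $2$ and of $(2r-1)$ (together with the $2^{-3/2}$ contributed by the passage to $D_r$) into the constant $Y=2^{(-2r^2+3r+1)/2}(2r-1)^{(r-1)/2}$.

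For the field containment $D_r\in\BQ_{8N_r}$, the key observation is
$$
\sin\frac{j\pi}{4r-2}=\frac{\zeta_{4(2r-1)}^{j}-\zeta_{4(2r-1)}^{-j}}{2i}\in\BQ(\zeta_{4(2r-1)}),
$$
together with $\sqrt{2r-1}\in\BQ(\zeta_{4(2r-1)})$ via the classical quadratic Gauss sum; since $(2r-1)\mid N_r$ by Lemma \ref{lem:field}, we have $4(2r-1)\mid 8N_r$ and hence $\BQ(\zeta_{4(2r-1)})\subset\BQ_{8N_r}$. The main obstacle in the argument is the routine but delicate bookkeeping of powers of $2$: one must pin down the precise rational prefactor in \cite[Thm.~3.3.20]{BK01} for type $D_r$ and combine it with $2^{r/2}$ from $(4r-2)^{r/2}$, the $2^{r(r-1)}$ extracted from the sine product, and the $2^{-3/2}$ from the identity $D_r=\sqrt{\dim(\CC_r)}/(2\sqrt 2)$, so that the net exponent of $2$ reduces exactly to $(-2r^2+3r+1)/2$. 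Once those constants are matched, the remainder of the derivation is immediate.
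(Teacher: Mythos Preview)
Your proposal is correct and follows essentially the same route as the paper: apply the Kac--Peterson formula \cite[Thm.~3.3.20]{BK01}, divide by $2\sqrt{2}$ to pass from $\sqrt{\dim(\CC_r)}$ to $D_r$, reindex the sine product via $\fd_r(j)$, and collect the rational constants into $Y$. The only minor difference is in the field-containment step: the paper simply invokes Lemma~\ref{lem:field} (which already gives $\sqrt{\dim(\CC_r)}\in\BQ_{N_r}$) together with $\sqrt{2}\in\BQ_8$, whereas you re-derive the containment by hand from the sines and the Gauss sum for $\sqrt{2r-1}$; both arguments are valid, but the paper's is shorter since the work was already done.
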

\begin{proof}
Recall that when $r$ is odd, $ 8\dim(\DD_r) = \dim(\CC_r)$. Therefore, $D_r\in \BQ_{8N_r}$ follows from Lemma \ref{lem:field} and the fact that $\sqrt{2}\in \BQ_{8}$. According to \cite[Thm.~3.3.20]{BK01} and the information on positive roots in Section \ref{sec:lie}, we have
\[
\begin{split}
D_r
&=\frac{1}{2\sqrt{2}}\sqrt{|\fP/((2r+h^{\vee})\fQ^{\vee})|}
\prod_{\al\in \De_+}\left(2\sin(\frac{(\al|\rho)}{2r+h^{\vee}}\pi)\right)^{-1}\\
&=Y\sqrt{2r-1}\prod_{\al\in \De_+}\left(\sin(\frac{(\al|\rho)}{4r-2}\pi)\right)^{-1}\,,
\end{split}
\]
and we are done by definition.
\end{proof}

\begin{lem}
Let $r \ge 2$ be even. Then $D_r\in \BQ_{N_r}$ and
\begin{equation}\label{eq:C}
D_r=Z\left(\prod_{j=1}^{2r-3}(\sin\frac{j\pi}{4r-2})^{\mathfrak{d}_r(j)}\right)^{-1}\,,
\end{equation}
where $Z= 2^{\frac{(3-2r)r}{2}} \cdot (2r-1)^{\frac{r}{2}} \in \BQ$. 
\end{lem}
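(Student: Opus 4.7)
The plan is to follow the proof of the preceding odd-$r$ lemma essentially verbatim, with the simplifications that occur when $r$ is even. The statement splits into two parts: membership in $\BQ_{N_r}$ and the explicit trigonometric formula, and both become cleaner in the even case because several half-integer exponents become integral.

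For the field-of-definition claim, I would start from the observation (made just after Lemma \ref{lem:de-gauging}) that $\FPdim(\CC_r)=16\,\FPdim(\DD_r)$ when $r$ is even, so that by pseudounitarity
$$D_r \;=\; \tfrac{1}{4}\sqrt{\dim(\CC_r)}.$$
Lemma \ref{lem:field} gives $\sqrt{\dim(\CC_r)}\in\BQ_{N_r}$, and dividing by the rational number $4$ does not enlarge the cyclotomic field, so $D_r\in\BQ_{N_r}$. Note that the conclusion here is sharper than in the odd case: there the factor $\sqrt{8}$ in the denominator forced passage to $\BQ_{8N_r}$, whereas now the denominator $4$ is rational.

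For the explicit formula, I would again invoke \cite[Thm.~3.3.20]{BK01}, which gives
$$\sqrt{\dim(\CC_r)} \;=\; \frac{1}{2\sqrt{2}}\sqrt{\bigl|\fP/((2r+h^\vee)\fQ^\vee)\bigr|}\,\prod_{\al\in\De_+}\!\left(2\sin\!\left(\frac{(\al|\rho)}{2r+h^\vee}\pi\right)\right)^{-1},$$
and combine with $D_r=\sqrt{\dim(\CC_r)}/4$. For type $D_r$ the root and coroot lattices coincide and $|\fP/\fQ|=4$, so with $N=4r-2$ we get $|\fP/(N\fQ^\vee)|=4N^r=2^{r+2}(2r-1)^r$. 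When $r$ is even, $r/2$ is an integer, so $\sqrt{|\fP/(N\fQ^\vee)|}=2^{(r+2)/2}(2r-1)^{r/2}$ is already rational: no stray $\sqrt{2}$ or $\sqrt{2r-1}$ has to be pulled outside, in contrast with the odd case. Pulling the factor $2^{-r(r-1)}$ out of $\prod_{\al\in\De_+}(2\sin(\cdot))^{-1}$ (using $|\De_+|=r(r-1)$) and collecting all remaining powers of $2$ and $(2r-1)$ produces the rational constant $Z$ in front of the sine product.

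The only thing to check is routine bookkeeping of the power of $2$ to confirm the exponent $(3-2r)r/2$; this parallels the calculation that produced $Y$ in the odd case. There is no substantive obstacle, since every irrational factor of the odd-$r$ formula (the $\sqrt{2}$ from $1/\sqrt{8}$ and the $\sqrt{2r-1}$ from $(2r-1)^{r/2}$) becomes rational under the parity change and is absorbed into $Z$.
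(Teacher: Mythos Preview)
Your approach is exactly the paper's: it simply says the statement ``follows from the similar argument as in Lemma~\ref{lem:Dr-odd} and Lemma~\ref{lem:field},'' and you have spelled out precisely that argument. One small caution on the bookkeeping you flag: in the odd-$r$ proof the displayed formula with the prefactor $\tfrac{1}{2\sqrt{2}}$ already has $D_r$ on the left, not $\sqrt{\dim(\CC_r)}$ --- that $\tfrac{1}{2\sqrt{2}}=1/\sqrt{8}$ is the division coming from $8\dim(\DD_r)=\dim(\CC_r)$, not part of the Bakalov--Kirillov formula itself --- so for even $r$ you should replace it by $\tfrac14$ rather than multiply by an additional $\tfrac14$.
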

\begin{proof}
Note that when $r$ is even, $16\dim(\DD_r) = \dim(\CC_r)$. Then the statement of the lemma follows from the similar argument as in Lemma \ref{lem:Dr-odd} and Lemma \ref{lem:field}.
\end{proof}

The above lemmas imply the following periodicity result on the signature, which is important to the Theorems below.

\begin{prop} \label{prop:Period}
Let $r \ge 1$ be any integer. If $k \equiv k' \pmod {4r-2}$ and $(k,4r-2)=1$, then $\sm_k(D_r)=\sm_{k'}(D_r)$, in particular,  $\e_{\DD_r}(\sm_k)=\e_{\DD_r}(\sm_{k'})$.
\end{prop}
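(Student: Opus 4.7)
The plan is to prove the stronger statement $\sm_k(D_r) = \sm_{k'}(D_r)$ directly from the explicit formulas \eqref{eq:D} and \eqref{eq:C}; since $\DD_r$ is pseudounitary, $\sqrt{\FPdim(\DD_r)} = D_r$, so the signature equality $\e_{\DD_r}(\sm_k) = \e_{\DD_r}(\sm_{k'})$ is immediate. Both formulas express $D_r$ as a rational scalar times (possibly) $\sqrt{2r-1}$ times $P^{-1}$, where $P = \prod_{j=1}^{2r-3} \sin(j\pi/(4r-2))^{\fd_r(j)}$, and I will show that each factor behaves the same way under $\sm_k$ and $\sm_{k'}$.

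The rational scalar is Galois fixed. When $r$ is odd, $2r-1\equiv 1\pmod{4}$, so $\sqrt{2r-1}$ is a classical quadratic Gauss sum living in $\BQ(\zeta_{2r-1})\subset \BQ_{8N_r}$, and $\sm_k(\sqrt{2r-1}) = \jac{k}{2r-1}\sqrt{2r-1}$ depends only on $k\pmod{2r-1}$. Since $(2r-1)\mid (4r-2)$, this factor contributes equally to $\sm_k(D_r)$ and $\sm_{k'}(D_r)$.

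For the sine product, Lemma \ref{lem:sin} gives
\[
\sm_k(P) = \jac{-1}{k}^{S}\prod_{j=1}^{2r-3}\sin\!\left(\frac{kj\pi}{4r-2}\right)^{\fd_r(j)},
\]
with $S = \sum_j \fd_r(j) = |\De_+| = r(r-1)$. This $S$ is always even as a product of consecutive integers, so $\jac{-1}{k}^{S}=1$ and the prefactor disappears. Writing $k' = k+m(4r-2)$, the identity $\sin(\theta+mj\pi) = (-1)^{mj}\sin\theta$ yields
\[
\prod_j \sin\!\left(\frac{k'j\pi}{4r-2}\right)^{\fd_r(j)} = (-1)^{m\sum_j j\,\fd_r(j)}\prod_j \sin\!\left(\frac{kj\pi}{4r-2}\right)^{\fd_r(j)}.
\]
Reducing $\sum_j j\,\fd_r(j)$ modulo $2$ gives exactly $|S_r|\pmod{2}$, which is zero by Lemma \ref{Lem:odd in m}, so the sign is $+1$. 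Combining everything yields $\sm_k(D_r) = \sm_{k'}(D_r)$.

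The main subtlety is the interplay between the sign $\jac{-1}{k}^{S}$ coming from Lemma \ref{lem:sin} and the sign $(-1)^{m\sum_j j\,\fd_r(j)}$ coming from the shift: the congruence $k\equiv k'\pmod{4r-2}$ controls neither $k\pmod{4}$ nor the parity of $m$, so each sign must vanish for its own reason. One is automatic from $r(r-1)$ being even, and the other is exactly the content of Lemma \ref{Lem:odd in m}. That combinatorial input on $\fd_r$ is the only step that is not essentially routine.
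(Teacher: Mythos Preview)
Your argument is correct and follows essentially the same route as the paper: both rely on the explicit formulas \eqref{eq:D}, \eqref{eq:C}, the evenness of $|\De_+|=r(r-1)$ to kill the $\jac{-1}{k}$ prefactor from Lemma~\ref{lem:sin}, Lemma~\ref{Lem:odd in m} to kill the sign coming from the $(4r-2)$--shift in the sines, and (for odd $r$) the fact that $\sqrt{2r-1}\in\BQ_{2r-1}$ so that its Galois action depends only on $k\bmod(2r-1)$. The only organizational difference is that the paper first reduces modulo $8r-4$ and then handles a single shift $k'=k+(4r-2)$, whereas you treat an arbitrary multiple $m(4r-2)$ in one step; this is a cosmetic streamlining, not a different idea.
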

\begin{proof}
From Lemma \ref{lem:field}, we have $k,k'$ are both coprime to $8N_r$, so the signatures are well defined. Assume $r$ is odd. When $k\equiv k' \pmod{8r-4}$, from \eqref{eq:Sin} and the fact that $\mid\De_+\mid=r(r-1)$ is even, we have 
$$
\sm_{k'}(D_r)=\left(\jac{-1}{k'}\jac{-1}{k}\right)^{\mid\De_+\mid}\sm_{k}(D_r)=\sm_{k}(D_r).
$$
Therefore, it suffices to consider the case when $k'=4r-2+k$. Without loss of generality, assume $k\equiv 1\pmod{4},\ k'\equiv 3\pmod{4}$. By assumption, $r$ is odd, so $2r-1\equiv 1 \pmod 4$, and we have $\sm_{k}(\sqrt{2r-1})=\sm_{k'}(\sqrt{2r-1})$ since $\sqrt{2r-1}\in \rational_{2r-1}$ (\cite{Was82}). By Equation \ref{eq:D}, Lemmas \ref{Lem:odd in m} and \ref{lem:sin}, we have
\[
\begin{split}
\sm_{k'}(D_r)&=\jac{-1}{k'}^{\mid\De_+\mid}Y\sqrt{2r-1}\left(\prod_{j=1}^{2r-3}\left(\sin\frac{(k+4r-2)j\pi}{4r-2}\right)^{\mathfrak{d}_r(j)}\right)^{-1}\\
&=Y\sqrt{2r-1}\left(\prod_{j=1}^{2r-3}\left((-1)^j\sin\frac{kj\pi}{4r-2}\right)^{\mathfrak{d}_r(j)}\right)^{-1}\\
&=(-1)^{\mid S_r\mid}\sm_{k}(D_r)\\
&=\sm_{k}(D_r)\,.\qedhere
\end{split}
\]
When $r$ is even, the proof is similar.
\end{proof}

In the following, we determine the structure of the Witt subgroup generated by families of $[\DD_r]$'s. The key point is to show that there are no nontrivial relations in the groups under study. To achieve this goal, we use special Galois elements to distinguish the signature homomorphisms of the $[\DD_r]$'s, and the parity of $r$ determines the way to find the elements. Naturally, from now on, we separate cases according to the parity of $r$.

\subsection{The case when \texorpdfstring{$r$}{} is odd}
In this subsection, let $r \ge 1$ be an odd integer.

\begin{prop} \label{prop:Dr-sign}
For any positive integer $r \equiv 5 \pmod 8$, we have
$
\e_{\DD_r}(\sm_r)=-1\,. 
$
\end{prop}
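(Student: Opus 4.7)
The plan is to apply $\sm_r$ to the product formula \eqref{eq:D} for $D_r$ and analyze the resulting sign. First, I would handle the Galois action factor by factor. The rational prefactor $Y$ is fixed. For $\sqrt{2r-1}$, the assumption $r\equiv 5\pmod 8$ gives $2r-1\equiv 1\pmod 8$, so $\sqrt{2r-1}\in\BQ_{2r-1}$ with Galois action $\sm_a(\sqrt{2r-1})=\jac{a}{2r-1}\sqrt{2r-1}$. Since $2r\equiv 1\pmod{2r-1}$, we have $r\equiv 2^{-1}\pmod{2r-1}$, so this Jacobi symbol equals $\jac{2}{2r-1}=1$ (using $2r-1\equiv 1\pmod 8$). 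For the sine factors, Lemma~\ref{lem:sin} produces the global factor $\jac{-1}{r}^{-\sum_j\fd_r(j)}=\jac{-1}{r}^{-r(r-1)}$, which equals $1$ since $r(r-1)$ is even. Thus the sign of $\sm_r(D_r)$ equals
\[
\prod_{j=1}^{2r-3}\sgn\left(\sin\frac{rj\pi}{4r-2}\right)^{\fd_r(j)}.
\]

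Next, I would show that $\lfloor rj/(4r-2)\rfloor=\lfloor j/4\rfloor$ for every $1\le j\le 2r-3$, via the identity
\[
\frac{rj}{4r-2}=\frac{j}{4}+\frac{j}{4(2r-1)},
\]
where the correction term lies in $(0,\tfrac14)$ while the fractional part of $j/4$ is at most $\tfrac34$, so their sum never reaches $1$. Hence $\sgn(\sin(rj\pi/(4r-2)))=(-1)^{\lfloor j/4\rfloor}$, and the problem reduces to proving that
\[
\Sigma_r:=\sum_{\substack{1\le j\le 2r-3\\ j\bmod 8\,\in\,\{4,5,6,7\}}}\fd_r(j)
\]
is odd.

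This last parity count is the main obstacle, and really the heart of the argument. I would write $r=8k+5$ and group the admissible $j$'s into length-$4$ blocks $B_i=\{8i+4,\ldots,8i+7\}$ for $i=0,\ldots,2k$. By Lemma~\ref{lem:fdrj}, the residue $j\bmod 8$ determines $\fd_r(j)\pmod 2$ with values depending on whether one is in the branch $j\le r-1$ (where $\fd_r(j)=r-\lfloor j/2\rfloor$) or $j\ge r$ (where $\fd_r(j)=r-\lceil(j+1)/2\rceil$). The cutoff $r-1=8k+4$ falls at the start of $B_k$, so a short table of residues shows that every ``complete'' block $B_i$ with $i\ne k$ contributes $0\pmod 2$, while the split block $B_k$ contributes $1$ from $j=8k+4$ (first branch) and $0,1,1$ from $j=8k+5,8k+6,8k+7$ (second branch), giving a total of $3\equiv 1\pmod 2$. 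Hence $\Sigma_r$ is odd, $\sm_r(D_r)<0$, and $\e_{\DD_r}(\sm_r)=-1$.
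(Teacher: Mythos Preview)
Your argument is correct and follows the same overall strategy as the paper: apply $\sm_r$ to the product formula~\eqref{eq:D}, dispose of the $\sqrt{2r-1}$ and $\jac{-1}{r}$ contributions, and reduce to a parity count over the sine factors. The organization differs in a small but pleasant way. The paper (writing $r=4k+1$ with $k$ odd) first lists the $j$'s for which $\fd_r(j)$ is odd and then evaluates $\lfloor rj/(4r-2)\rfloor$ for those $j$ directly, obtaining $(-1)^{\sum_m(2m+1)+\sum_n 2n}=(-1)^k$. You take the transpose: you first prove the clean identity $\lfloor rj/(4r-2)\rfloor=\lfloor j/4\rfloor$ for all $1\le j\le 2r-3$, which reduces the sign to $(-1)^{\Sigma_r}$ with $\Sigma_r=\sum_{j\bmod 8\in\{4,5,6,7\}}\fd_r(j)$, and then determine the parity of $\fd_r(j)$ on those residue classes. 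Your floor identity is a genuine simplification---it replaces several ad hoc floor evaluations by one uniform statement---while the paper's ordering has the mild advantage of reusing the same ``odd $\fd_r(j)$'' list that appears in the proof of Lemma~\ref{Lem:odd in m}. Your handling of $\jac{r}{2r-1}$ via $r\equiv 2^{-1}\pmod{2r-1}$ and $\jac{2}{2r-1}=1$ is also a slight variant of the paper's use of quadratic reciprocity to reach $\jac{-1}{r}=1$; both are immediate.
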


\begin{proof}
From Lemma \ref{lem:field}, $(r,8N_r)=(r,2r-1)=1$. Let $r=4k+1$ for some odd integer $k$. By \eqref{eq:m}, it is clear that $\mathfrak{d}_r(j)$ is odd when $j\in \{4m+1,4(m+1),4n+2,4n+3\ |\ 0\leq m\leq k-1,\  k\leq n\leq 2k-1\}$. By Lemma \ref{lem:Dr-odd}, \cite[Lem.~6.2, 6.3]{NRWZ20a} and the quadratic reciprocity, we have: 
\[
\begin{split}
\e_{\DD_r}(\sm_r)
&=\jac{r}{2r-1}(-1)^{\sum\limits^{k-1}_{m=0}(\lfloor\frac{(4m+1)r}{4r-2}\rfloor+\lfloor\frac{4(m+1)r}{4r-2}\rfloor)+\sum\limits^{2k-1}_{n=k}(\lfloor\frac{4n+2}{4r-2}\rfloor+\lfloor\frac{(4n+3)r}{4r-2}\rfloor)}\\
&=\jac{-1}{r}(-1)^{\sum\limits_{m=0}^{k-1}(m+m+1)+\sum\limits_{n=k}^{2k-1}(n+n)}
=(-1)^k =-1\,. \qedhere
\end{split}
\]
\end{proof}

Now we are ready to determine the subgroup of $\WW$ generated by $\DD_r$ for a collection of odd $r$'s. Let $\{a_j\}_{j\geq 1}$ be a sequence of prime numbers such that $a_j \equiv 9 \pmod {16}$ and $a_j<a_{j+1}$. Such a sequence exists due to the Dirichlet prime number theorem. Let $r_j=\frac{a_j+1}{2}$, then $r_j \equiv 5 \pmod 8$ and $a_j=2r_j-1$. 

\begin{thm} \label{mutural distinctness}
The Witt subgroup generated by $\{[\DD_{r_j}]\}_{j\geq 1}$ is isomorphic to $\BZ/2\BZ^{\op\BN}$.
\end{thm}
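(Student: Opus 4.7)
The plan is to leverage the order-two relation \eqref{eq:ord2} together with the signature homomorphism $I\colon\WW\to\UU_2$ from Section \ref{subsec:prelim-Witt}. Since each $[\DD_{r_j}]$ squares to $[\Vec]$ by \eqref{eq:ord2}, the subgroup they generate is automatically a $\BZ/2\BZ$-vector space, so it suffices to prove that the classes $\{[\DD_{r_j}]\}_{j\ge 1}$ are linearly independent over $\BZ/2\BZ$. Equivalently, I need to show that for every nonempty finite $J\subset\BN$, the product $\prod_{j\in J}[\DD_{r_j}]$ is not the trivial class; since $I$ is a homomorphism and signatures multiply pointwise, it is enough to exhibit a single $\sm\in\GQ$ with $\prod_{j\in J}\e_{\DD_{r_j}}(\sm)=-1$.

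The Galois element $\sm$ should ``select'' the largest index in $J$. Write $j_\ast=\max J$ and note that $4r_j-2=2a_j$ with the $a_j$'s pairwise distinct odd primes. By Proposition \ref{prop:Period} each signature function $\e_{\DD_{r_j}}$ descends to a function on $(\BZ/2a_j\BZ)^{\times}$, so the plan is to choose a single integer $k$ whose reduction mod $2a_{j_\ast}$ is $r_{j_\ast}$ and whose reduction mod $2a_j$ is $1$ for all other $j\in J$. Since $r_{j_\ast}=(a_{j_\ast}+1)/2$ is coprime to $a_{j_\ast}$ (from $a_{j_\ast}=2r_{j_\ast}-1$) and is odd (because $a_{j_\ast}\equiv 9\pmod{16}$ forces $r_{j_\ast}\equiv 5\pmod 8$), the Chinese Remainder Theorem applied to the pairwise coprime moduli $\{a_j:j\in J\}\cup\{2\}$ yields an odd integer $k$ with
\begin{equation*}
k\equiv r_{j_\ast}\pmod{a_{j_\ast}},\qquad k\equiv 1\pmod{a_j}\text{ for }j\in J\setminus\{j_\ast\}.
\end{equation*}
The odd-parity condition upgrades these to $k\equiv r_{j_\ast}\pmod{2a_{j_\ast}}$ and $k\equiv 1\pmod{2a_j}$ respectively, because $r_{j_\ast}$ is odd.

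Applying Proposition \ref{prop:Period} to this $k$ I get $\e_{\DD_{r_{j_\ast}}}(\sm_k)=\e_{\DD_{r_{j_\ast}}}(\sm_{r_{j_\ast}})$, which equals $-1$ by Proposition \ref{prop:Dr-sign} (using $r_{j_\ast}\equiv 5\pmod 8$), while $\e_{\DD_{r_j}}(\sm_k)=\e_{\DD_{r_j}}(\sm_1)=+1$ for the remaining indices. Multiplying gives $\prod_{j\in J}\e_{\DD_{r_j}}(\sm_k)=-1$, so $\prod_{j\in J}[\DD_{r_j}]\ne[\Vec]$, which establishes the claimed linear independence and hence the isomorphism with $(\BZ/2\BZ)^{\oplus\BN}$.

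The main obstacle is really the arithmetic setup that makes the CRT choice possible: I need $a_{j_\ast}$ to be coprime to $r_{j_\ast}$ so that $\sm_{r_{j_\ast}}$ is a meaningful Galois element on $\BQ_{4r_{j_\ast}-2}$, I need the $a_j$'s to be pairwise coprime so that CRT applies, and I need $r_{j_\ast}$ to be odd so that a congruence mod $a_{j_\ast}$ lifts to one mod $2a_{j_\ast}$. All three are built into the sequence $a_j\equiv 9\pmod{16}$ prime, which is precisely why this family was singled out; after these are verified, Propositions \ref{prop:Period} and \ref{prop:Dr-sign} finish the argument immediately.
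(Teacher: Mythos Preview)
Your proof is correct and follows essentially the same strategy as the paper: both use the signature homomorphism together with Propositions \ref{prop:Period} and \ref{prop:Dr-sign}, constructing an integer $k$ congruent to $r_{j_\ast}$ modulo $2a_{j_\ast}$ and to $1$ modulo the remaining $2a_j$'s (you via CRT on $\{a_j\}\cup\{2\}$, the paper via an explicit Bezout combination). The only cosmetic difference is that the paper singles out the smallest index in the finite set rather than the largest.
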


\begin{proof}
Since $\DD_r$ has order 2 in the Witt group (see \eqref{eq:ord2}), it suffices to show that no finite product of distinct $\DD_{r_j}$ yields $[\Vec]$. To that end, let $\DD:=\bt_{i=1}^{n}\DD_{r_{j_i}}$ for some subsequence $1 \le j_1 < \cdots < j_n$. From Lemma \ref{lem:field} and Lemma \ref{lem:Dr-odd}, we have $\sqrt{\dim(\DD)}\in \BQ_{N}$, where $N=\lcm_{1\leq i\leq n}\{8N_{r_{j_i}}\}$. Suppose $[\DD] = [\Vec]$, then by \cite[Thm.~3.5]{NRWZ20a}, for any $\sm\in \gal(\overline{\BQ}/\BQ)$, we have 
\[\e_{\DD}(\sm)=1\,.\]
Let $K=\lcm_{2\leq i\leq n}\{2a_{j_i}\}$, then $(a_{j_1},K)=1$ by construction. Hence, by Bezout's identity, there exist $x,y\in \BZ$ such that
$$xa_{j_1}+yK=1\,.$$
Let $k=-(r_{j_1}-1)xa_{j_1}+r_{j_1}$. Since $2\mid (r_{j_1}-1)$, we have
\begin{equation}\label{eq:k_value}
 k\equiv 1 \pmod{K}\,,\quad k\equiv r_{j_1}\pmod{2a_{j_1}}\,. 
\end{equation}
In particular, by Lemma \ref{lem:field} $\gcd(k, N) = 1$. 

Since $2a_{j_i}=4r_{j_i}-2$, by Proposition \ref{prop:Period} and Proposition \ref{prop:Dr-sign}, there exists $\sm\in \GQ$, such that $\sm|_{\BQ_{N}}=\sm_{k}$, and
$$
\e_{\DD_{r_{j_1}}}(\sm)=-1 \ \text{and}\ \e_{\DD_{r_{j_i}}}(\sm)=1\  (i\neq 1)\,.
$$
This implies that
$
\e_{\DD}(\sm)=-1,
$
which is a contradiction.
\end{proof}

Consider $\II := (\sl_2)_2$. It is an Ising modular category, i.e., $\II$ is not pointed and $\FPdim(\II) = 4$, and is extensively studied in \cite[Appendix B]{DrGNO}. Since $[\II]^2 \in \WW_\pt$ and both $\WW_{\pt}$ and $\langle[\II]\rangle$ are contained in the 2-torsion part of $\WW$ (see \cite{DMNO}), it is natural to compare these Witt subgroups with the one we have studied. Let $H$ denote the group generated by $\{[\DD_{r_j}]\}_{j\geq 1}$.

\begin{thm}\label{Not inside group odd}
We have $H\cap \langle\WW_\pt,[\II]\rangle= \{[\Vec]\}$.
\end{thm}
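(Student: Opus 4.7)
The plan is to extend the argument of Theorem \ref{mutural distinctness} by exploiting the fact that $\e_\II\equiv 1$, so the signature of any class in $\langle \WW_\pt,[\II]\rangle$ factors through a quadratic extension of $\BQ$.

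Suppose for contradiction that $[\DD]\in H\cap\langle\WW_\pt,[\II]\rangle$ is nontrivial. Then $\DD\cong\bt_{i=1}^n\DD_{r_{j_i}}$ for some $n\ge 1$ and $j_1<\cdots<j_n$, and $[\DD]=[\mathcal{P}\bt\II^a]$ for a pointed modular category $\mathcal{P}=\CC(A,Q)$ and $a\in\{0,1\}$ (reducing using $[\II]^2\in\WW_\pt$). Since $\sqrt{\FPdim(\II)}=2\in\BQ$, $\e_\II\equiv 1$, and since $I$ is a homomorphism, $\e_\DD=\e_\mathcal{P}$. Let $m_0$ be the squarefree part of $|A|$, so $\e_\mathcal{P}$ factors through $\gal(\BQ(\sqrt{m_0})/\BQ)$.

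As in the proof of Theorem \ref{mutural distinctness}, I would choose $k_0\in\BZ$ with $k_0\equiv r_{j_1}\pmod{2a_{j_1}}$ and $k_0\equiv 1\pmod{2a_{j_i}}$ for $i\ne 1$, so that Propositions \ref{prop:Period} and \ref{prop:Dr-sign} give $\e_\DD(\sigma_{k_0})=-1$. Setting $L=\prod_i a_{j_i}$ and noting that $D_{r_{j_i}}\in\BQ_{a_{j_i}}$ by Proposition \ref{prop:Period}, $\sigma_{k_0}\in\gal(\BQ_L/\BQ)$ unambiguously computes $\e_\DD$. The goal is now to extend $\sigma_{k_0}$ to some $\sigma\in\GQ$ with $\sigma(\sqrt{m_0})=\sqrt{m_0}$, for then $\e_\mathcal{P}(\sigma)=1\ne-1=\e_\DD(\sigma)$ yields the desired contradiction.

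By Galois theory, such a $\sigma$ exists iff $\sigma_{k_0}$ acts trivially on $\BQ_L\cap\BQ(\sqrt{m_0})$. Since each $a_{j_i}\equiv 9\pmod{16}$ is prime with $a_{j_i}\equiv 1\pmod 4$, the quadratic subfields of $\BQ_L$ are precisely $\BQ(\sqrt{\prod_{i\in T}a_{j_i}})$ for $T\subseteq\{1,\ldots,n\}$. Thus the intersection is either $\BQ$ (in which case compatibility is automatic) or $\BQ(\sqrt{\prod_{i\in T}a_{j_i}})$ for some $T$, in which case $\sigma_{k_0}$ acts on $\sqrt{\prod_{i\in T}a_{j_i}}$ by $\prod_{i\in T}\jac{k_0}{a_{j_i}}$. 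For $i\ne 1$, $\jac{k_0}{a_{j_i}}=1$ by construction, and for $i=1$ we have $\jac{r_{j_1}}{a_{j_1}}=\jac{2}{a_{j_1}}^{-1}=1$, since $a_{j_1}\equiv 9\pmod{16}$ forces $a_{j_1}\equiv 1\pmod 8$ and hence $\jac{2}{a_{j_1}}=1$ by the supplementary law. Therefore $\sigma_{k_0}$ fixes $\sqrt{m_0}$, the extension exists, and the argument concludes. The main subtlety, which is the crux of the proof, is that the same congruence $a_j\equiv 9\pmod{16}$ that drives Theorem \ref{mutural distinctness} (via Proposition \ref{prop:Dr-sign}) simultaneously yields $\jac{2}{a_{j_1}}=1$, the precise number-theoretic compatibility needed to neutralize the pointed signature contribution here.
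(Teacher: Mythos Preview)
Your proof is correct, and it takes a genuinely different route from the paper's. The paper treats the pointed and Ising parts separately: for $H\cap\WW_\pt$ it builds $k$ via CRT so that $k\equiv 1\pmod{h_1}$ (the prime-to-$a_{j_1}$ part of $|L|$) and $k\equiv r_{j_1}\pmod{2a_{j_1}}$, then directly computes $\e_\LL(\sigma_k)=\jac{k}{h_1}\jac{k}{a_{j_1}}^s=1$ using quadratic reciprocity in the form $\jac{r_{j_1}}{2r_{j_1}-1}=\jac{-1}{r_{j_1}}=1$; the Ising contribution is then ruled out \emph{not} by signature but by the first central charge, using that $\xi_1(\II)$ is a primitive $16$th root of unity while $\xi_1(\DD_r)=1$ and $\xi_1$ of a pointed category is an $8$th root of unity. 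Your approach is more uniform: observing $\e_\II\equiv 1$ (because $\FPdim(\II)=4$) lets you absorb the Ising factor into the signature argument and avoid $\xi_1$ entirely, and your Galois-extension argument replaces the explicit CRT bookkeeping with the clean compatibility check on $\BQ_L\cap\BQ(\sqrt{m_0})$. Both arguments hinge on the same arithmetic fact (your $\jac{2}{a_{j_1}}=1$ versus the paper's $\jac{-1}{r_{j_1}}=1$ are equivalent computations of $\jac{r_{j_1}}{2r_{j_1}-1}$). Your version buys a shorter, signature-only proof; the paper's separation buys a demonstration that central charge genuinely obstructs $\II$, which may be of independent interest.
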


\begin{proof}
It suffices to show product of any finite set in $\{[\DD_{r_j}]\}$ intersect trivially with $\langle\WW_\pt,[\II]\rangle$. To that end, let $\DD:=\bt_{i=1}^{n}\DD_{r_{j_i}}$ for some $1 \le j_1 < \cdots < j_n$. If $[\DD] \in H\cap \WW_\pt$ is nontrivial, there exists a finite abelian group $L$ and a non-degenerate quadratic form $Q: L\to \BC^{\times}$ such that the corresponding pointed modular category $\LL=\CC(L,Q)$ satisfies $[\LL]=[\DD]$ in $\WW$. Consequently, by \cite[Thm.~3.5]{NRWZ20a}, for any $\sm\in \gal(\overline{\BQ}/\BQ)$, we have
$$
\e_{\DD}(\sm)=\e_{\LL}(\sm)\,.
$$
Let $h=|L|$, $N=h\cdot \lcm_{1\leq i\leq n}\{8N_{r_{j_i}}\}$, then both $\sqrt{h}$ and $\sqrt{\dim(\DD)}$ are in the field $\BQ_{N}$. We write $h=h_1h_2$, where $(h_1, a_{j_1})=1$, $h_2=a_{j_1}^s$ for some $s \ge 1$. Let $K=h_1 \cdot \lcm_{2\leq i\leq n}\{2a_{j_i}\}$, we have $(a_{j_1}, K)=1$. Hence, from Bezout’s identity, there exist $x,y\in \BZ$, such that:
$$
xa_{j_1}+yK=1
$$
Let $k=-(r_{j_1}-1)xa_{j_1}+r_{j_1}$, since $2|(r_{j_1}-1)$, we have:
$$
k\equiv 1 \pmod{K}\,,\quad k\equiv r_{j_1}\pmod{2a_{j_1}}\,.
$$
Again by Lemma \ref{lem:field}, $\gcd(k,N)=1$. By Proposition \ref{prop:Period} and Proposition \ref{prop:Dr-sign}, there exists $\sm\in \GQ$, such that $\sm|_{\BQ_{N}}=\sm_{k}$ and

$$
\e_{\DD_{r_{j_1}}}(\sm)=-1,\ \text{and}\  \e_{\DD_{r_{j_i}}}(\sm)=1\  (i\neq 1)\,,
$$
which implies
$\e_{\DD}(\sm)=-1$.

Since $k \equiv 1 \pmod{4}$, $k\equiv 1 \pmod{h_1}$ and $r_{j_1} \equiv 1 \pmod 4$, so by \cite[Lem.~6.2]{NRWZ20a}, we have
$$
\begin{aligned}
\e_{\LL}(\sm)
&= \sgn(\sm_k(\sqrt{h_1h_2}))
= \jac{k}{h_1}\jac{k}{h_2}
= \jac{k}{a_{j_1}}^s\\
&= \jac{r_{j_1}}{2r_{j_1}-1}^s
=\jac{2r_{j_1}-1}{r_{j_1}}^s
=\jac{-1}{r_{j_1}}^s
=1\,,
\end{aligned}
$$
a contradiction. So $H\cap \WW_\pt$ is trivial.

Since $[\II]^2\in \WW_\pt$, if $[\DD] = [\II]^m[\HH']$ for some $[\HH']\in \WW_\pt$, then by the above discussions, we must have $m$ is odd. Let $\kappa = \xi_1(\II)$, then $\kappa$ is a primitive 16-th root of unity. Moreover, it is well-known that the first central charge of a pointed modular category is an 8-th root of unity (see \cite{DLN15}). Consequently, there exists some $\ell \in \BZ$ such that $\xi_1(H') = \kappa^\ell$. Moreover, by \eqref{eq:SemD} and the multiplicativity of the central charge (\cite{DrGNO}), we have $ \xi_1(\DD_r)=1$. Therefore,
$$
1 = \xi_1(\DD) = \xi_1(\II)^m \xi_1(\HH') = \kappa^m \cdot \kappa^{2\ell} = \kappa^{m + 2\ell}\,.
$$
However, as $m$ is odd, the right hand side can never be 1, so we get a contradiction. Thus, we must have $H\cap \langle\WW_\pt,[\II]\rangle= \{[\Vec]\}$.
\end{proof}

In view of \eqref{eq:SemD}, \eqref{eq:ord2}, and combining all the results above, we have the following corollary.
\begin{cor}
The image of $[\Vec]$ has infinitely many square roots in $\WW/\langle\WW_\pt,[\II]\rangle$, hence the same is true of $[\Sem]^2$.\qed
\end{cor}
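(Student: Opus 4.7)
The plan is to leverage the two main theorems already established, namely Theorem \ref{mutural distinctness} (linear independence of the family $\{[\DD_{r_j}]\}$) and Theorem \ref{Not inside group odd} (trivial intersection $H\cap \langle \WW_\pt, [\II]\rangle = \{[\Vec]\}$). The strategy is to show that the quotient map $\WW\to \WW/\langle \WW_\pt,[\II]\rangle$ sends the countable family $\{[\DD_{r_j}]\}_{j\ge 1}$ to an infinite collection of pairwise distinct square roots of the image of $[\Vec]$.

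First I would observe that by \eqref{eq:ord2}, each $[\DD_{r_j}]^2=[\Vec]$, so the image $\overline{[\DD_{r_j}]}$ in the quotient squares to the image of $[\Vec]$ (which is the identity). Hence each $\overline{[\DD_{r_j}]}$ is a square root of $\overline{[\Vec]}$.

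Next I would verify that these images are pairwise distinct. Suppose $\overline{[\DD_{r_j}]}=\overline{[\DD_{r_k}]}$ in the quotient for some $j\ne k$. Then $[\DD_{r_j}]\cdot [\DD_{r_k}]^{-1}\in \langle \WW_\pt,[\II]\rangle$. Since each $[\DD_{r_i}]$ has order $2$ by \eqref{eq:ord2}, this element equals $[\DD_{r_j}]\cdot [\DD_{r_k}]$, which also lies in $H$. By Theorem \ref{Not inside group odd}, this forces $[\DD_{r_j}]\cdot [\DD_{r_k}]=[\Vec]$, i.e.\ $[\DD_{r_j}]=[\DD_{r_k}]$ in $H$, contradicting the $\BZ/2\BZ^{\oplus \BN}$ structure established in Theorem \ref{mutural distinctness}. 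Thus $\{\overline{[\DD_{r_j}]}\}_{j\ge 1}$ is an infinite set of distinct square roots of $\overline{[\Vec]}$.

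For the final claim, I would use that the semion modular category is pointed, so $[\Sem]\in \WW_\pt$ and hence $[\Sem]^2\in \WW_\pt\subset \langle \WW_\pt,[\II]\rangle$. Therefore $[\Sem]^2$ has the same image as $[\Vec]$ in the quotient, and so the same collection $\{\overline{[\DD_{r_j}]}\}_{j\ge 1}$ provides infinitely many square roots of $\overline{[\Sem]^2}$ as well. There is no real obstacle to overcome here: the entire statement is a formal consequence of the two preceding theorems together with the fact that $[\DD_r]$ has order $2$, so the only thing to be careful about is the bookkeeping in the injectivity argument above.
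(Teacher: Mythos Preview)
Your proposal is correct and matches the paper's approach: the paper marks this corollary with \qed and simply says it follows ``in view of \eqref{eq:SemD}, \eqref{eq:ord2}, and combining all the results above,'' which is exactly the formal deduction you spell out from Theorems \ref{mutural distinctness} and \ref{Not inside group odd} together with $[\DD_r]^2=[\Vec]$ and $[\Sem]\in\WW_\pt$.
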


\subsection{The case when \texorpdfstring{$r$}{} is even}
In this subsection, let $r \ge 2$ be an even integer. 

\begin{prop}\label{prop:Cr-sign}
Let $r \equiv 4 \pmod 8$, $s\equiv 6 \pmod 8$ we have
\[ 
\e_{\DD_r}(\sm_{2r+1})=\e_{\DD_{s}}(\sm_{s-1})=-1\,. 
\]
\end{prop}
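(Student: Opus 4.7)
The plan is to mirror the proof of Proposition \ref{prop:Dr-sign}, now using the even-$r$ formula \eqref{eq:C}. Since $r$ and $s$ are even, the prefactor $Z$ in \eqref{eq:C} is a positive rational number (note $(2r-1)^{r/2} \in \BQ$), so only the trigonometric product affects the signature. Applying Lemma \ref{lem:sin} term by term yields a global Jacobi symbol factor $\jac{-1}{k}^{|\De_+|}$, which equals $1$ because $|\De_+| = r(r-1)$ is even. Combined with the elementary identity $\sgn(\sin(x\pi)) = (-1)^{\lfloor x\rfloor}$ for $x \notin \BZ$ (applicable since $\gcd(2r+1, 4r-2) = \gcd(2r+1, 4) = 1$ and $\gcd(s-1, 4s-2) = \gcd(s-1, 2) = 1$), we obtain
\begin{equation*}
\e_{\DD_r}(\sm_k) = (-1)^{\Sm_{k,r}}\,, \quad \Sm_{k,r} := \sum_{j=1}^{2r-3}\fd_r(j)\left\lfloor \tfrac{kj}{4r-2}\right\rfloor\,.
\end{equation*}

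Next, only indices $j$ with $\fd_r(j)$ odd contribute to $\Sm_{k,r}$ modulo $2$. By Lemma \ref{lem:fdrj} and the fact that $r$ is even, these $j$ are determined by explicit congruences modulo $4$ whose precise form depends on the residue of $r$ modulo $8$. To evaluate the floor function, for $k = 2r+1$ I would use the decomposition $(2r+1)j/(4r-2) = j/2 + j/(2r-1)$ to derive
\begin{equation*}
\left\lfloor\tfrac{(2r+1)j}{4r-2}\right\rfloor = \begin{cases} j/2 & \text{if } j \text{ even}\,,\\ (j-1)/2 & \text{if } j \text{ odd and } j < r\,,\\ (j+1)/2 & \text{if } j \text{ odd and } j \geq r\,,\end{cases}
\end{equation*}
while for $k = s-1$ I would use $(s-1)j/(4s-2) = j/4 - j/(8s-4)$ to obtain $\lfloor (s-1)j/(4s-2)\rfloor = \lfloor j/4\rfloor$ when $4\nmid j$ and $j/4 - 1$ when $4 \mid j$.

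With these pointwise formulas in hand, I would split $\Sm_{k,r}$ into the two segments $[1,r-1]$ and $[r, 2r-3]$ coming from Lemma \ref{lem:fdrj}, use $r \equiv 4 \pmod 8$ (respectively $s \equiv 6 \pmod 8$) to fix parities at the endpoints, and reduce each partial sum modulo $2$ to a short count over arithmetic progressions. The main obstacle is the case-by-case bookkeeping: there are four residue classes of $j$ modulo $4$ to track, the transition at $j = r$ changes the floor formula in the $k = 2r+1$ case, and the two segments interact non-trivially with these parities. Once the counts are carried through carefully, $\Sm_{k,r}$ turns out to be odd in both cases, yielding the claimed $\e_{\DD_r}(\sm_k) = -1$.
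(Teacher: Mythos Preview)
Your proposal is correct and follows essentially the same approach as the paper: both reduce to the parity of $\Sigma_{k,r}$, identify the $j$ with $\fd_r(j)$ odd via Lemma~\ref{lem:fdrj}, evaluate the floors, and sum modulo $2$ to obtain $(-1)^k$ (resp.\ $(-1)^l$) with $r=4k$, $s=4l+2$ and $k,l$ odd. Your decompositions $(2r+1)j/(4r-2)=j/2+j/(2r-1)$ and $(s-1)j/(4s-2)=j/4-j/(8s-4)$ give a slightly cleaner justification for the floor values than the paper, which simply records them, but the argument is otherwise identical.
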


\begin{proof}
From Lemma \ref{lem:field}, $\gcd(2r+1,N_r)=\gcd(2r+1,2r-1)=1$ and $\gcd(s-1,N_{s})=\gcd(s-1,2s-1)=1$. Similar to the proof of Proposition \ref{prop:Dr-sign},
let $r=4k$, $s=4l+2$ for some odd integers $k$, $l$. By \eqref{eq:m}, it is clear that $\mathfrak{d}_r(j)$ is odd when $j\in \{4m+2,4m+3,4n,4n+1\ |\ 0\leq m\leq k-1,\  k\leq n\leq 2k-1\}$ and $\mathfrak{d}_{s}(j)$ is odd when $j\in \{4m+2,4m+3,4n,4n+1\ |\ 0\leq m\leq l-1,\  l+1\leq n\leq 2l\}$. Since $2r+1 \equiv 1\pmod 4,\ l-1\equiv 1 \pmod 4$, by \eqref{eq:C}, \cite[Lem.~6.1]{NRWZ20a}, we have: 
\[\begin{split}
\e_{\DD_r}(\sm_{2r+1})
&=(-1)^{\sum\limits^{k-1}_{m=0}(\lfloor\frac{(4m+2)(2r+1)}{4r-2}\rfloor+\lfloor\frac{4m+3)(2r+1)}{4r-2}\rfloor)+\sum\limits^{2k-1}_{n=k}(\lfloor\frac{4n(2r+1)}{4r-2}\rfloor+\lfloor\frac{(4n+1)(2r+1)}{4r-2}\rfloor)}\\
&=(-1)^{\sum\limits_{m=0}^{k-1}(2m+1+2m+1)+\sum\limits_{n=k}^{2k-1}(2n+1+2n)}\\
&=(-1)^k=-1\
\end{split}\]
and
\[\begin{split}
\e_{\DD_{s}}(\sm_{s-1})
&=(-1)^{\sum\limits^{l-1}_{m=0}(\lfloor\frac{(4m+2)(s-1)}{4r-2}\rfloor+\lfloor\frac{4m+3)(s-1)}{4r-2}\rfloor)+\sum\limits^{2l}_{n=l+1}(\lfloor\frac{4n(s-1)}{4r-2}\rfloor+\lfloor\frac{(4n+1)(s-1)}{4r-2}\rfloor)}\\
&=(-1)^{\sum\limits_{m=0}^{l-1}(m+m)+\sum\limits_{n=l+1}^{2l}(n-1+n)}\\
&=(-1)^l=-1\,.\qedhere
\end{split}\]
\end{proof}

Let $\{b_j\}_{j\geq 1}$, $\{c_j\}_{j\geq 1}$ be two increasing sequences of prime numbers such that $b_j \equiv 7 \pmod {16}$ and $c_j \equiv 11 \pmod {16}$. Such sequences exist due to the Dirichlet prime number theorem. Let $r_j=\frac{b_j+1}{2}$, $s_j=\frac{c_j+1}{2}$, then $r_j \equiv 4 \pmod 8$ and $s_j \equiv 6 \pmod 8$.

\begin{thm} \label{C_r distinct}
$\{[\DD_{r_j}]\}_{j\geq 1}$ and $\{[\DD_{s_j}]\}_{j\geq 1}$ each generate a group isomorphic to $\BZ/2\BZ^{\op\BN}$ in the Witt group, and the intersection of these groups is trivial.
\end{thm}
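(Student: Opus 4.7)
The plan is to mimic the proof of Theorem \ref{mutural distinctness} almost verbatim for each of the two sequences separately, substituting Proposition \ref{prop:Cr-sign} for Proposition \ref{prop:Dr-sign} and working with moduli $2b_j = 4r_j - 2$ (respectively $2c_j = 4s_j - 2$) rather than $2a_j$; then to handle the intersection by constructing a single Galois element that isolates one $D$-type factor and simultaneously acts trivially on all the other factors, regardless of whether they come from the $r$-sequence or the $s$-sequence.

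First, I would fix a nontrivial product $\DD := \bt_{i=1}^n \DD_{r_{j_i}}$ with $j_1 < \cdots < j_n$. By Lemma \ref{lem:field} and the even-$r$ formula for $D_r$, we have $\sqrt{\dim(\DD)} \in \BQ_N$ for $N = \lcm_i N_{r_{j_i}}$, whose largest odd factor divides $\prod_i b_{j_i}$. Since the $b_{j_i}$ are distinct odd primes, the moduli $2b_{j_1}, 2b_{j_2}, \ldots, 2b_{j_n}$ are pairwise coprime after factoring out a single $2$, so by Bezout (as in the proof of Theorem \ref{mutural distinctness}) I can produce $k \in \BZ$ coprime to $N$ with
\[
k \equiv 2r_{j_1}+1 \pmod{2b_{j_1}}, \qquad k \equiv 1 \pmod{2b_{j_i}}\ (i>1).
\]
Lift $\sm_k \in \gal(\BQ_N/\BQ)$ to $\sm \in \GQ$. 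By Proposition \ref{prop:Period} and the fact that $\sm_1$ is trivial, Proposition \ref{prop:Cr-sign} gives $\e_{\DD_{r_{j_1}}}(\sm) = -1$ and $\e_{\DD_{r_{j_i}}}(\sm) = 1$ for $i>1$, so $\e_\DD(\sm) = -1$. Together with \eqref{eq:ord2} and \cite[Thm.~3.5]{NRWZ20a}, this forces the $\{[\DD_{r_j}]\}_{j\geq 1}$ to be $\BZ/2\BZ$-linearly independent. The argument for $\{[\DD_{s_j}]\}_{j\geq 1}$ is identical, using the target congruence $k \equiv s_{k_1}-1 \pmod{2c_{k_1}}$ in place of the first congruence above.

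For the triviality of the intersection, suppose toward contradiction that $[\DD] = [\HH]$ in $\WW$, where $\DD := \bt_{i=1}^n \DD_{r_{j_i}}$ is nontrivial and $\HH := \bt_{l=1}^m \DD_{s_{k_l}}$ (with $m \geq 0$). The corresponding cyclotomic conductor now divides $N' := \lcm(N,N'')$ with $N'' = \lcm_l N_{s_{k_l}}$; since the primes $\{b_{j_i}\}_i \cup \{c_{k_l}\}_l$ are all distinct (they have different residues mod $16$), the Chinese Remainder Theorem furnishes $k \in \BZ$ with $\gcd(k,N')=1$ satisfying
\[
k \equiv 2r_{j_1}+1 \pmod{2b_{j_1}}, \quad k \equiv 1 \pmod{2b_{j_i}}\ (i>1), \quad k \equiv 1 \pmod{2c_{k_l}}\ (\text{all } l).
\]
Again by Propositions \ref{prop:Period} and \ref{prop:Cr-sign}, any lift $\sm \in \GQ$ of $\sm_k$ satisfies $\e_\DD(\sm) = -1$ while $\e_\HH(\sm) = 1$, contradicting $\e_\DD = \e_\HH$.

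The main obstacle is verifying that the sequences $\{b_j\}$ and $\{c_j\}$ are chosen carefully enough that, within a single multiplicative combination across both families, the CRT step really does produce a $k$ coprime to the whole conductor $N'$ and simultaneously realizing all the prescribed residues; this is exactly why the author imposes the congruence conditions $b_j \equiv 7$ and $c_j \equiv 11 \pmod{16}$, which in particular guarantee the $b_j$'s and $c_j$'s are disjoint sets of odd primes and that each $r_{j_1} \equiv 4$, $s_{k_1} \equiv 6 \pmod 8$ fall into the hypotheses of Proposition \ref{prop:Cr-sign}. Once the residue-bookkeeping is set up cleanly, the signature computation collapses to a single $-1$ coming from the singled-out factor, and the rest of the argument is routine.
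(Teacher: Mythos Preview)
Your proposal is correct and follows essentially the same approach as the paper: the paper likewise uses Bezout/CRT to build a $k$ with $k\equiv 2r_{j_1}+1\pmod{2b_{j_1}}$ (resp.\ $t\equiv s_{l_1}-1\pmod{2c_{l_1}}$) and $k\equiv 1$ modulo the remaining $2b_{j_i}$ (resp.\ $2c_{l_i}$), then invokes Propositions~\ref{prop:Period} and~\ref{prop:Cr-sign}; for the intersection the paper also just enlarges the lcm to include both families and repeats the argument, exactly as you describe.
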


\begin{proof}

Similar to the argument as in Theorem \ref{mutural distinctness}, it suffices to show that no finite product of distinct $\DD_{r_j}$ or $\DD_{s_j}$ yields $[\Vec]$. To that end, let $\DD:=\bt_{i=1}^{n}\DD_{r_{j_i}}$, and $\Tilde{\DD}:=\bt_{i=1}^{n}\DD_{s_{l_i}}$ for arbitrary finite subsets of $\{r_j\}$ and $\{s_j\}$, respectively.  We have $\sqrt{\dim(\DD)}\in \BQ_N$, $\sqrt{\dim(\tilde{\DD})}\in\BQ_{\tilde{N}}$, where $N=\lcm_{1\leq i\leq n }\{N_{r_{j_i}}\}$ and $\tilde{N}=\lcm_{1\leq i\leq n }\{\tilde{N}_{s_{l_i}}\}$. Suppose $[\DD]=[\Tilde{\DD}] = [\Vec]$, then by \cite[Thm.~3.5]{NRWZ20a}, for any $\sm\in \gal(\overline{\BQ}/\BQ)$, we have 
$$\e_{\DD}(\sm)=\e_{\Tilde{\DD}}(\sm)=1\,.$$ 
Let $K=\lcm_{2\leq i\leq n}\{2b_{j_i}\},\  M=\lcm_{2\leq i\leq n}\{2c_{l_i}\}$, then $(b_{j_1},K)=1,\ (c_{l_1},M)=1$ by construction. Hence, by Bezout's identity, there exist $x,y,z,w\in \BZ$ such that
$$xb_{j_1}+yK=1\quad\quad zc_{l_1}+wM=1\,.$$
Let $k=-(2r_{j_1})xb_{j_1}+2r_{j_1}+1$, $t=-(s_{l_1}-2)zc_{l_1}+s_{l_1}-1$. Since $2\mid 2r_{j_1}$ and $2\mid s_{l_1}-2$, we have
$$
\begin{aligned}
&k\equiv 1 \pmod{K}\,,\quad k\equiv r_{j_1}\pmod{2b_{j_1}}\,,\\
&t\equiv 1 \pmod{M}\,,\quad t\equiv s_{l_1}-1\pmod{2c_{l_1}}\,.
\end{aligned}
$$
Since $2b_{j_i}=4r_{j_i}-2,\ 2c_{l_i}=4s_{l_i}-2$, we have $(k,N)=(t,\tilde{N})=1$ (Lemma \ref{lem:field}). By Proposition \ref{prop:Period} and Proposition \ref{prop:Dr-sign}, there exist $\sm$, $\eta\in \gal(\ol{\BQ}/\BQ)$ such that $\sm|_{\BQ_{N}}=\sm_{k}$, $\eta|_{\BQ_{\tilde{N}}}=\eta_{t}$
$$
\e_{\DD_{r_{j_1}}}(\sm)=\e_{\DD_{s_{l_1}}}(\eta)=-1 \ \text{and}\ \e_{\DD_{r_{j_i}}}(\sm)=\e_{\DD_{s_{l_i}}}(\eta)=1\  (i\neq 1)\,.
$$
This implies that
$$
\e_{\DD}(\sm)=\e_{\Tilde{\DD}}(\eta)=-1,
$$
which is a contradiction.\\
Moreover, by considering the products of the categories in two different families, and the fact that $b_j,c_{j}$ are distinct primes, the trivial intersection follows from the same argument as above (consider the least common multiple for all $2b_{j_i}$s and $2c_{l_i}$s except $2b_{j_1}$). 
\end{proof}

Let $H'$ denote the group generated by $\{[\DD_{r_j}]\}_{j\geq 1}$ and $ \{[\DD_{s_j}]\}_{j\geq 1}$. Similar to Theorem \ref{Not inside group odd}, we have

\begin{thm}\label{Not inside group even}
We have $H'\cap \langle\WW_\pt,[\II]\rangle= \{[\Vec]\}$.
\end{thm}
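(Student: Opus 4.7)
The plan is to mirror the proof of Theorem \ref{Not inside group odd}, now handling the two families simultaneously. Take any finite product $\DD := (\bt_{i=1}^{n}\DD_{r_{j_i}}) \bt (\bt_{i=1}^{p}\DD_{s_{l_i}})$ with strictly increasing indices, and suppose $[\DD] = [\LL][\II]^m$ for a pointed modular category $\LL = \CC(L, Q)$ with $h := |L|$. The aim is to force $n = p = 0$, i.e.\ $[\DD] = [\Vec]$. The argument proceeds in two stages: a central-charge reduction that eliminates the $[\II]^m$ contribution, followed by a Galois-signature argument ruling out nontrivial products.

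For the reduction stage, I would compute $\xi_1(\DD_r)$ directly from \eqref{eq:xi-1} together with the fact that $[\CC_r] = [\DD_r]$ for even $r$ (see \eqref{eq:SemD}). For $r_j \equiv 4 \pmod 8$ the exponent $r_j^2/4 = (r_j/2)^2$ is even, giving $\xi_1(\DD_{r_j}) = 1$; for $s_j \equiv 6 \pmod 8$ the exponent is odd, giving $\xi_1(\DD_{s_j}) = -1$. Hence $\xi_1(\DD) = (-1)^p \in \{\pm 1\}$ by multiplicativity and Witt-invariance of $\xi_1$. Since $\xi_1(\II) = \exp(\pi i/8)$ is a primitive $16$-th root of unity while $\xi_1(\LL)$ is an $8$-th root of unity (as for any pointed modular category), an odd $m$ would force $\xi_1(\LL)\xi_1(\II)^m$ to have exact order $16$, contradicting $\xi_1(\DD) \in \{\pm 1\}$. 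Thus $m$ is even, $[\II]^m \in \WW_\pt$, and absorbing it into $\LL$ we may assume $[\DD] = [\LL]$ with $\LL$ pointed.

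For the signature stage, consider first the case $n \geq 1$. Factor $h = h_1 h_2$ with $\gcd(h_1, b_{j_1}) = 1$ and $h_2 = b_{j_1}^s$, set $K := h_1 \cdot \lcm_{i \geq 2}\{2 b_{j_i}\} \cdot \lcm_i\{2 c_{l_i}\}$ (coprime to $b_{j_1}$), and use Bezout to produce $k \in \BZ$ with $k \equiv 1 \pmod K$ and $k \equiv 2 r_{j_1} + 1 \pmod{2 b_{j_1}}$, as in Theorem \ref{Not inside group odd}; the explicit choice $k = -2 r_{j_1} x b_{j_1} + 2 r_{j_1} + 1$ additionally yields $k \equiv 1 \pmod 8$ since $2 r_{j_1} \equiv 0 \pmod 8$. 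By Propositions \ref{prop:Period} and \ref{prop:Cr-sign}, $\e_{\DD_{r_{j_1}}}(\sm_k) = -1$ while $\e_{\DD_{r_{j_i}}}(\sm_k) = \e_{\DD_{s_{l_i}}}(\sm_k) = 1$ for all other indices, so $\e_\DD(\sm_k) = -1$. On the other hand, by \cite[Lem.~6.2]{NRWZ20a}, $\e_\LL(\sm_k) = \jac{k}{h_1} \jac{k}{b_{j_1}}^s$: the first factor is $1$ since $k \equiv 1 \pmod{h_1}$ and $k \equiv 1 \pmod 8$; for the second, $k \equiv 2 r_{j_1} + 1 \equiv 2 \pmod{b_{j_1}}$ (using $b_{j_1} = 2 r_{j_1} - 1$) combined with $b_{j_1} \equiv -1 \pmod 8$ gives $\jac{k}{b_{j_1}} = \jac{2}{b_{j_1}} = 1$. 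This contradicts $\e_\DD(\sm_k) = -1$.

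The case $n = 0$, $p \geq 1$ is entirely parallel with $s_{l_1}, c_{l_1}$ in place of $r_{j_1}, b_{j_1}$: one constructs $t$ with $t \equiv 1 \pmod{K'}$ and $t \equiv s_{l_1} - 1 \pmod{2 c_{l_1}}$, so that $\e_\DD(\sm_t) = -1$, and then reduces $t \equiv s_{l_1} - 1 \equiv (c_{l_1} - 1)/2 \equiv -2^{-1} \pmod{c_{l_1}}$; combined with $c_{l_1} \equiv 3 \pmod 8$ and $c_{l_1} \equiv 3 \pmod 4$ (both consequences of $c_{l_1} \equiv 11 \pmod{16}$), this yields $\jac{t}{c_{l_1}} = \jac{-1}{c_{l_1}} \jac{2}{c_{l_1}} = (-1)(-1) = 1$, again contradicting $\e_\DD(\sm_t) = -1$. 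I expect the main subtlety to lie in the calibration of the arithmetic progressions $b_j \equiv 7$ and $c_j \equiv 11 \pmod{16}$: they are engineered precisely so that the quadratic-reciprocity products $\jac{2}{b_{j_1}}$ and $\jac{-1}{c_{l_1}}\jac{2}{c_{l_1}}$ both collapse to $+1$, while Proposition \ref{prop:Cr-sign} simultaneously gives signature $-1$ on the distinguished factor and $k$ is kept in the correct residue class modulo $8$ to control $\sgn(\sm_k(\sqrt{h_1}))$.
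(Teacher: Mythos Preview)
Your proof is correct and follows essentially the same approach as the paper's: a central-charge reduction to eliminate the Ising factor, followed by the signature argument with Jacobi-symbol computations $\jac{2r_{j_1}+1}{b_{j_1}}=\jac{2}{b_{j_1}}=1$ and $\jac{s_{l_1}-1}{c_{l_1}}=\jac{-1}{c_{l_1}}\jac{2}{c_{l_1}}=1$. The paper simply reverses the order (signature first, then central charge) and presents the argument more tersely by pointing back to Theorems~\ref{Not inside group odd} and~\ref{C_r distinct}; your more explicit handling of the two families and the mod-$8$ bookkeeping for $k$ and $t$ is accurate.
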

\begin{proof}
From the proof of Theorem \ref{Not inside group odd}, \ref{C_r distinct} and Proposition \ref{prop:Cr-sign}, it suffices to check for any $j$, $2r_j+1\equiv 1 \pmod 4$, $s_j-1\equiv 1 \pmod 4$ and $\jac{2r_j+1}{2r_{j}-1}=\jac{s_j-1}{2s_{j}-1}=1$. Indeed,
$\jac{2r_j+1}{2r_{j}-1}=\jac{2}{2r_{j}-1}=1$ (since when $p\equiv \pm1 \pmod 8$,\  $\jac{2}{p}=1$), and others are also easy to check.

\end{proof}

\section{Relation with Witt subgroup generated by \texorpdfstring{$\so(2r+1)_{2r+1}$}{}}
Let $\BB_b=\so(2b+1)_{2b+1}$, where $b>1$ is an odd integer. In \cite{NRWZ20a}, the Witt subgroup generated by an infinite sequence of $\BB_b$'s, denoted by $G_{\mathbf{p}}$, was studied. Here, $\mathbf{p}$ is a sequence of odd primes satisfying certain congruence conditions in \cite[Section ~6.2]{NRWZ20a}. It is shown in \cite[Theorem~6.7]{NRWZ20a} that $G_{\mathbf{p}}$ contains an elementary 2-group of infinite rank, so it is natural to compare it with the Witt subgroups in this paper.  

In this section, we prove there is an infinite subgroup of the group generated by $[\DD_{r}]$ with $r\equiv 4\pmod{8}$ that intersects trivially with the group $G_{\mathbf{p}}$. The tool we use is still the signature homomorphism. Thus, we start with the square root of the global dimension of $\BB_b$ (cf.~\cite[Prop.~5.3]{NRWZ20a})

\begin{equation}\label{eq:CC}
B_b := \sqrt{\dim(\BB_b)}=
W\sqrt{b}\left(\prod_{l=1}^{b}(\sin\frac{(2l-1)\pi}{8b})\prod_{j=1}^{2b-2}(\sin\frac{j\pi}{4b})^{\mathfrak{c}_b(j)}\right)^{-1}\,,  
\end{equation}
where $W=\frac{b^{\frac{b-1}{2}}}{2^{b^2-b-1}}\in \BQ$ and $\mathfrak{c}_b(j)=b-\lceil\frac{j}{2}\rceil$ for $1\leq j\leq 2b-2$. 

\begin{lem} \label{lem:Bfield}
We have 
\begin{itemize}
\item[(i)] 
$B_b\in \BQ_{16b}$.
\item[(ii)]
Let $k$ be any integer such that $k\equiv 1\pmod{4}$ and $\gcd(k,b)=1$. For any $x \in \BZ$, if we denote $k' = 8xb+k$, then $\e_{\BB_b}(\sm_{k'})=(-1)^x\e_{\BB_b}(\sm_{k})$.
\end{itemize}
\end{lem}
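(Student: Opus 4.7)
The plan is to handle (i) and (ii) separately, both by examining each factor in the explicit formula \eqref{eq:CC} for $B_{b}$.

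For (i), I would argue that every factor of $B_{b}$ lies in $\BQ_{16b}$. The rational constant $W$ is trivially in $\BQ_{16b}$. Since $b$ is odd, classical quadratic Gauss sum theory gives $\sqrt{b}\in\BQ_{4b}\subseteq\BQ_{16b}$. Each sine $\sin\frac{j\pi}{4b}=(\zeta_{8b}^{j}-\zeta_{8b}^{-j})/(2i)$ lies in $\BQ_{8b}\subseteq\BQ_{16b}$, and each $\sin\frac{(2l-1)\pi}{8b}=(\zeta_{16b}^{2l-1}-\zeta_{16b}^{-(2l-1)})/(2i)$ lies in $\BQ_{16b}$. Closure of $\BQ_{16b}$ under products and quotients then gives $B_{b}\in\BQ_{16b}$.

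For (ii), I would first verify that both $k$ and $k'=8xb+k$ are odd, congruent to $1\pmod 4$, and coprime to $16b$, so that $\sm_{k}$ and $\sm_{k'}$ are well-defined elements of $\gal(\BQ_{16b}/\BQ)$. In particular, $\sm_{k}(i)=i^{k}=i$ (since $k\equiv1\pmod 4$) and likewise $\sm_{k'}(i)=i$, so for any sine $\sin\frac{m\pi}{N}$ lying in $\BQ_{16b}$ one has $\sm_{k}\!\left(\sin\tfrac{m\pi}{N}\right)=\sin\tfrac{km\pi}{N}$, and similarly for $k'$. Next I would compare the two actions factor by factor. The constant $W$ is fixed by both. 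Since $4b\mid 8xb$, we have $k'\equiv k\pmod{4b}$, so $\sm_{k'}(\sqrt{b})=\sm_{k}(\sqrt{b})$. For $\sin\frac{j\pi}{4b}$, the identity $\sin\frac{k'j\pi}{4b}=\sin\!\bigl(2xj\pi+\frac{kj\pi}{4b}\bigr)=\sin\frac{kj\pi}{4b}$ shows $\sm_{k'}$ and $\sm_{k}$ agree on each such factor. The only factors on which they disagree are the $\sin\frac{(2l-1)\pi}{8b}$, for which $\sin\frac{k'(2l-1)\pi}{8b}=\sin\!\bigl(x(2l-1)\pi+\frac{k(2l-1)\pi}{8b}\bigr)=(-1)^{x(2l-1)}\sin\frac{k(2l-1)\pi}{8b}=(-1)^{x}\sin\frac{k(2l-1)\pi}{8b}$, using that $2l-1$ is odd.

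Since there are $b$ such factors in the denominator of $B_{b}$, the two Galois actions differ by a total factor of $(-1)^{xb}$ on the denominator; because $b$ is odd this equals $(-1)^{x}$, so $\sm_{k'}(B_{b})=(-1)^{x}\sm_{k}(B_{b})$. Taking signs yields $\e_{\BB_{b}}(\sm_{k'})=(-1)^{x}\e_{\BB_{b}}(\sm_{k})$.

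The main technical point is simply bookkeeping: one must correctly identify the conductor of each trigonometric factor and check that $k\equiv 1\pmod 4$ together with $\gcd(k,b)=1$ propagates to $k'$, so that both Galois elements are defined and act trivially on $i$. Once this is set up, the argument reduces to the elementary observation that shifting the argument of $\sin$ by an integer multiple of $\pi$ flips its sign according to the parity of that integer, and that the odd-$b$ hypothesis collapses $(-1)^{xb}$ to $(-1)^{x}$.
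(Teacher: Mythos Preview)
Your proposal is correct and follows essentially the same approach as the paper: both proofs of (i) identify the same conductor for each factor ($\sqrt{b}\in\BQ_{4b}$, $\sin\frac{j\pi}{4b}\in\BQ_{8b}$, $\sin\frac{(2l-1)\pi}{8b}\in\BQ_{16b}$), and for (ii) the paper's terse reference to ``the similar argument as Proposition~\ref{prop:Period}'' together with \cite[Lemma~6.1]{NRWZ20a} unpacks to precisely the factor-by-factor comparison you wrote out, culminating in the same $(-1)^{xb}=(-1)^{x}$ step.
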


\begin{proof}
Since $\sqrt{b}\in \BQ_{4b}$, $\sin(\frac{j\pi}{4b})\in \BQ_{8b}$ and $\sin(\frac{(2l-1)\pi}{8b})\in \BQ_{16b}$, so $B_b\in \BQ_{16b}$. 

By assumption, $k\equiv 1\pmod 4$ and $k,k'$ are both coprime to $16b$, so the signatures are well defined. Moreover, by the similar argument as Proposition \ref{prop:Period}, together with \cite[Lemma~6.1]{NRWZ20a}, we immediately have (note that $b$ is odd)
\[
\e_{\BB_b}(\sm_{k'})=(-1)^{bx}\e_{\BB_b}(\sm_{k})=(-1)^{x}\e_{\BB_b}(\sm_{k})\,.\qedhere
\]
\end{proof}
\begin{prop}\label{compare signature}
Let $r\equiv 12\pmod{80}$, $b=2r-1\equiv 23 \pmod{160} $ and $x\in \BZ$. We have
\[ 
\e_{\DD_{r}}(\sm_{r-3})=1,\quad\e_{\BB_{b}}(\sm_{8xb+r-3})=(-1)^{x}\,. 
\]
\end{prop}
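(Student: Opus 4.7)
The plan is to reduce the assertion about $\e_{\BB_{b}}(\sm_{8xb+r-3})$ to the single statement $\e_{\BB_{b}}(\sm_{r-3})=1$ via Lemma \ref{lem:Bfield}(ii), and then to evaluate both $\e_{\DD_r}(\sm_{r-3})$ and $\e_{\BB_{b}}(\sm_{r-3})$ from the product formulas \eqref{eq:C} and \eqref{eq:CC}, in the same style as Propositions \ref{prop:Dr-sign} and \ref{prop:Cr-sign}. From $r\equiv 12\pmod{80}$ one gets $r-3\equiv 9\pmod{80}$, hence $r-3\equiv 1\pmod 4$ and $r-3\equiv 4\pmod 5$; since $b=2(r-3)+5$ this gives $\gcd(r-3,b)=\gcd(r-3,5)=1$, so $k=r-3$ satisfies the hypotheses of Lemma \ref{lem:Bfield}(ii) and
\[
\e_{\BB_{b}}(\sm_{8xb+r-3})=(-1)^x\,\e_{\BB_{b}}(\sm_{r-3})\,.
\]

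For $\e_{\DD_r}(\sm_{r-3})$ I would apply $\sm_{r-3}$ to \eqref{eq:C}. The prefactor $Z$ is a positive rational and $\jac{-1}{r-3}=1$, so Lemma \ref{lem:sin} combined with the standard identity $\sin(k\pi/N)=(-1)^{\lfloor k/N\rfloor}\sin((k\bmod N)\pi/N)$ reduces the signature to
\[
\e_{\DD_r}(\sm_{r-3})
=(-1)^{\sum_{j=1}^{2r-3}\mathfrak{d}_r(j)\,\lfloor(r-3)j/(4r-2)\rfloor}\,.
\]
Writing $r=4k$ with $k=20m+3$ odd, Lemma \ref{lem:fdrj} identifies the $j$'s on which $\mathfrak{d}_r(j)$ is odd as the four arithmetic progressions $\{4m+2,4m+3,4n,4n+1\}$ that already appear in the proof of Proposition \ref{prop:Cr-sign}. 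I would then evaluate the floor using the decomposition $(r-3)j/(4r-2)=j/4-10j/(4(4r-2))$, sum the four resulting parity contributions class by class, and verify the total exponent is even.

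For $\e_{\BB_b}(\sm_{r-3})$ I would apply $\sm_{r-3}$ to \eqref{eq:CC}. Using $\sqrt{b}=-i\sqrt{-b}$ (valid since $b\equiv 3\pmod 4$) and $\sm_{r-3}(i)=i$ (as $r-3\equiv 1\pmod 4$), one has $\sm_{r-3}(\sqrt{b})=\jac{r-3}{b}\sqrt{b}$, and two applications of quadratic reciprocity give
\[
\jac{r-3}{b}=\jac{b}{r-3}=\jac{5}{r-3}=\jac{r-3}{5}=\jac{4}{5}=1\,,
\]
so $\sqrt{b}$ contributes no sign. The sine factors then yield
\[
\prod_{l=1}^{b}(-1)^{\lfloor(r-3)(2l-1)/(8b)\rfloor}\cdot\prod_{j=1}^{2b-2}(-1)^{\mathfrak{c}_b(j)\lfloor(r-3)j/(4b)\rfloor}\,,
\]
which I would evaluate by the analogous progression-bookkeeping, now using $b\equiv 23\pmod{160}$ and $\mathfrak{c}_b(j)=b-\lceil j/2\rceil$. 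The main obstacle is the combinatorics of these two parity sums: the congruence $r\equiv 12\pmod{80}$ is calibrated precisely to pin down $r\bmod 5$ and $r\bmod 4$ (needed for the Jacobi symbols), the parity patterns of $\mathfrak{d}_r(j)$ and $\mathfrak{c}_b(j)$, and the locations where the floor functions jump, so once these alignments are made explicit the remaining calculation should be routine, if tedious.
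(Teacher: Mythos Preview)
Your plan is correct and matches the paper's approach: the paper likewise reduces via Lemma \ref{lem:Bfield}(ii) to $\e_{\BB_b}(\sm_{r-3})=1$, applies $\sm_{r-3}$ to the product formulas \eqref{eq:C} and \eqref{eq:CC}, obtains the same Jacobi symbol $\jac{r-3}{b}=\jac{5}{r-3}=\jac{4}{5}=1$, and then carries out the floor-function bookkeeping you describe (writing $r=80k+12$, splitting $m$ and $l$ into residue classes, and tallying parities). The only difference is that the paper executes that ``routine, if tedious'' step explicitly; your outline is otherwise the same argument.
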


\begin{proof}
Suppose $r=80k+12$ for some integers $k\geq 0$ and $b=160k+23$. Since 
$r-3\equiv 1 \pmod 8$, $r-3$ is coprime to both $8N_r$ and $16b$. We have seen in Proposition \ref{prop:Cr-sign} that $\mathfrak{d}_{r}(j)$ is odd when $j\in \{4m+2,4m+3,4n,4n+1\ |\ 0\leq m\leq 20k+2,\  20k+3\leq n\leq 40k+5\}$, and it is straight forward to check $\mathfrak{c}_{b}(j)$ is odd when $j\in\{4m+3,4(m+1)\mid 0\leq m\leq 80k+10\} $. By \eqref{eq:C}, \eqref{eq:CC}, \cite[Lem.~6.2, 6.3]{NRWZ20a} (as $r-3\equiv 1 \pmod 8$) and the quadratic reciprocity, we have 
\[
\e_{\DD_{r}}(\sm_{r-3})
=(-1)^{\sum\limits^{20k+2}_{m=0}(\lfloor\frac{(4m+2)(r-3)}{4r-2}\rfloor+\lfloor\frac{4m+3)(r-3)}{4r-2}\rfloor)+\sum\limits^{40k+5}_{n=20k+4}(\lfloor\frac{4n(r-3)}{4r-2}\rfloor+\lfloor\frac{(4n+1)(r-3)}{4r-2}\rfloor)}
\]
and
\[\begin{split}
\e_{\BB_{b}}(\sm_{r-3})
&=\jac{r-3}{b}(-1)^{\sum\limits^{b}_{l=1}(\lfloor\frac{(2l-1)(r-3)}{8b}\rfloor+\sum\limits^{80k+10}_{m=0}(\lfloor\frac{(4m+3)(r-3)}{4b}\rfloor+\lfloor\frac{(4(m+1)(r-3)}{4b}\rfloor)}\,.
\end{split}\]
Now we compute the exponents in the above equations term by term.
\[\begin{split}
&\lfloor\frac{(4m+2)(r-3)}{4r-2}\rfloor=m+\lfloor\frac{2r-10m-6}{4r-2}\rfloor
=
\begin{cases}
m & \text{ if } 0\leq m\leq \lfloor\frac{2r-6}{10}\rfloor=16k+1\,,\\
m-1 & \text{ if } 16k+2\leq m \leq 20k+2\,,
\end{cases}\\
&\lfloor\frac{(4m+3)(r-3)}{4r-2}\rfloor=m+\lfloor\frac{3r-10m-9}{4r-2}\rfloor=m\,,\\
&\lfloor\frac{(4n)(r-3)}{4r-2}\rfloor=n-1+\lfloor\frac{10n-4r+2}{4r-2}\rfloor
=
\begin{cases}
n-2 & \text{if } 20k+3\leq m\leq 32k+4\,,\\
n-1 & \text{if } 32k+5\leq m \leq 40k+5\,,
\end{cases}\\
&\lfloor\frac{(4n+1)(r-3)}{4r-2}\rfloor=n-1+\lfloor\frac{10n-5r+5}{4r-2}\rfloor
=n-2\,.
\end{split}\]
Therefore, we have 
\[\e_{\DD_{r}}(\sm_{r-3})
=(-1)^{\sum\limits^{20k+2}_{m=16k+2}(m-1+m)+\sum\limits^{40k+5}_{n=32k+5}(n-1+n-2)}
=(-1)^{(4k+1)+(2k+1)}
=1\,.\]

To facilitate the various cases, write $m=2m'+s$ for $s\in \{0,1\}$, and write $l=8l'+t$ for $t\in \{1,2,3,4,5,6,7,8\}$, then we have (i) $0\leq m'\leq 40k+5$ when $s=0$; (ii) $0\leq m'\leq 40k+4$ when $s=1$; (iii) $0\leq l'\leq 20k+2$ when $t\in \{1,2,3,4,5,6,7\}$ and (iv) $0\leq l'\leq 20k+1$ when $t=8$. Therefore,

\[\begin{split}
\lfloor\frac{(4m+3)(r-3)}{4b}\rfloor=&m'+\lfloor\frac{(240+320s)k-20m+27+36s}{640k+92}\rfloor\\
=&
\begin{cases}
m' & s \in \{0, 1\} \text{ and } \  0\leq m'\leq (12+16s)k + 2s+1\,,\\
m'-1 & s \in \{0, 1\}\text{ and } (12+16s)k+2(s+1)\leq m' \leq 40k+5-s\,.
\end{cases}\\
\end{split}\]

\[\begin{split}
\lfloor\frac{4(m+1)(r-3)}{4b}\rfloor=&m'+\lfloor\frac{(320+320s)k-20m+36+36s}{640k+92}\rfloor\\
=&
\begin{cases}
m' & s \in \{0, 1\} \text{ and }  0\leq m'\leq 16(1+s)k+2s+1\,,\\
m'-1 & s \in \{0, 1\} \text{ and } 16(1+s)k+2(1+s)\leq m' \leq 40k+5-s\,.
\end{cases}\\
\end{split}\]

\[\begin{split}
\lfloor\frac{(2l-1)(r-3)}{8b}\rfloor=&l'+\lfloor\frac{(160t-80)k+18t-9-40l'}{1280k+184}\rfloor\\
=&
\begin{cases}
l' & 1 \le t \le 5 \text{ and }  0\leq l'\leq (4t-2)k + \lfloor\frac{t-1}{2}\rfloor\,,\\
l'-1 & 1\le t \le 5 \text{ and }  (2+4t)k + \lfloor\frac{t+1}{2}\rfloor\leq l' \leq 20k+2\,,\\
l' & t\in \{6, 7\}  \text{ and } 0\leq l'\leq 20k+2\,,\\
l' & t=8 \text{ and } 0\leq l'\leq 20k+1\,.
\end{cases}\\
\end{split}\]

Therefore, we have
\[\begin{split}
\e_{\BB_{b}}(\sm_{r-3})
=&\jac{80k+9}{160k+23}(-1)^{\sum\limits^{28k+3}_{m'=12k+2}(m'-1+m')+40k+5-1+\sum\limits^{32k+3}_{m'=16k+2}(m'-1+m')+40k+5-1}\\
\times&(-1)^{\sum\limits^{2k+1}_{m'=6k}(l'-1+l')+\sum\limits^{14k+1}_{m'=10k+2}(l'-1+l')+\sum\limits^{20k+2}_{m'=18k+3}(l'-1+l')+20k+2}\\
=&\jac{5}{80k+9}(-1)^{(16k+2)+40k+4+(16k+2)+40k+4+(4k)+(4k)+(2k)+20k+2}\\
=&\jac{9}{5}
=1\,.
\end{split}\]

From Lemma \ref{lem:Bfield} ($r-3\equiv 1 \pmod 8$), we have 
\[
\e_{\BB_{b}}(\sm_{8x+(r-3)})=(-1)^{x}\e_{\BB_{b}}(\sm_{r-3})=(-1)^x\,.\qedhere
\]
\end{proof}
Let $\{b_i\}_{i\geq 1}$ be a sequence of prime numbers such that $b_i\equiv23 \pmod{160}$, which exists due to the Dirichlet prime number theorem. Let $r_i=\frac{b_i+1}{2}\equiv 12 \pmod{80}$, and we use $H_{\mathbf{r}}$ to denote the group generated by $\{[\DD_{r_i}]\}_{i\geq 1}$. Let $G_{\mathbf{p}}$ be the Witt subgroup at the beginning of this section. Note that the set $\{b_i\}$ may intersect with $\mathbf{p}$.
\begin{thm} 
With the above notations, we have $G_{\mathbf{p}}\cap H_{\mathbf{r}}=\{[\Vec]\}$.
\end{thm}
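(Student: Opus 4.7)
The plan is to mirror the strategy of Theorem~\ref{Not inside group odd}: represent an element of the intersection in both families and construct a Galois element at which the two resulting sign products disagree. Assume $[X]\in G_\mathbf{p}\cap H_\mathbf{r}$. Both $H_\mathbf{r}$ (by \eqref{eq:ord2}) and $G_\mathbf{p}$ (by \cite[Thm.~6.7]{NRWZ20a}) are elementary abelian 2-groups, so I can write
\[
[X]=\bt_{i=1}^{n}[\DD_{r_{j_i}}]=\bt_{l=1}^{m}[\BB_{b_{k_l}}]
\]
with distinct indices on each side. If $n=0$ then $[X]=[\Vec]$ and we are done; otherwise set $b^{*}:=b_{j_1}=2r_{j_1}-1$ for any fixed index $j_1$. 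Multiplicativity of the signature gives
\[
\prod_{i=1}^{n}\e_{\DD_{r_{j_i}}}(\sm)=\prod_{l=1}^{m}\e_{\BB_{b_{k_l}}}(\sm)\quad\text{for all }\sm\in\GQ,
\]
and it suffices to exhibit $\sm=\sm_k$ making the two sides unequal; I split according to whether $b^{*}$ appears among the $b_{k_l}$.

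\emph{Case 1: $b^{*}\notin\{b_{k_1},\dots,b_{k_m}\}$.} By CRT pick $k$ with $k\equiv 2r_{j_1}+1\pmod{2b^{*}}$, $k\equiv 1\pmod{2b_{j_i}}$ for $2\le i\le n$, and $k\equiv 1\pmod{16 b_{k_l}}$ for $1\le l\le m$; all conditions are compatible (every required residue is odd, the primes $b_{j_i}$, $b_{k_l}$ are odd with $b^{*}$ distinct from each $b_{k_l}$, and $\gcd(2r_{j_1}+1,b^{*})=\gcd(2,b^{*})=1$). Since $r_{j_1}\equiv 4\pmod 8$, Propositions~\ref{prop:Period} and~\ref{prop:Cr-sign} give $\e_{\DD_{r_{j_1}}}(\sm_k)=-1$, while every other factor is $1$ because $\sm_k$ acts trivially on $\BQ_{2b_{j_i}}$ (for $i\ne 1$) and on $\BQ_{16 b_{k_l}}$. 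Hence the LHS equals $-1$ and the RHS equals $+1$, a contradiction.

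\emph{Case 2: $b^{*}=b_{k_{l_0}}$ for some $l_0$.} By CRT pick $k$ with $k\equiv r_{j_1}-3+8b^{*}\pmod{16 b^{*}}$, $k\equiv 1\pmod{2b_{j_i}}$ for $2\le i\le n$, and $k\equiv 1\pmod{16 b_{k_l}}$ for $l\ne l_0$. The key $2$-adic compatibility $r_{j_1}-3+8b^{*}\equiv 1\pmod{16}$ holds because $r_{j_1}\equiv 12\pmod{16}$ and $b^{*}\equiv 7\pmod{16}$, both forced by $r_j\equiv 12\pmod{80}$ and $b_j\equiv 23\pmod{160}$. Then Proposition~\ref{compare signature} with $x=1$ yields $\e_{\DD_{r_{j_1}}}(\sm_k)=1$ (periodicity mod $2b^{*}$) and $\e_{\BB_{b^{*}}}(\sm_k)=-1$, while the remaining factors equal $1$; hence the LHS is $+1$ and the RHS is $-1$.

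The hardest part is the simultaneous $2$-adic book-keeping in the CRT constructions above: the $\DD_r$-signatures depend on $k$ only modulo $2b$ (Proposition~\ref{prop:Period}), whereas the $\BB_b$-signatures see $k$ modulo $16b$ (Lemma~\ref{lem:Bfield}), so the two families interact through their $2$-parts. The specific congruence classes $r_j\equiv 12\pmod{80}$ and $b_j\equiv 23\pmod{160}$ were engineered precisely to guarantee $r_j-3+8b_j\equiv 1\pmod{16}$, making Case 2 go through; once this is in hand, the rest reduces to routine CRT.
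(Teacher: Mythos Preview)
Your overall strategy matches the paper's: split into two cases according to how $\BB_{b^*}$ (with $b^*=2r_{j_1}-1$) enters the $G_{\mathbf{p}}$-side, and in each case use Proposition~\ref{compare signature} (together with Propositions~\ref{prop:Period} and~\ref{prop:Cr-sign}) and a CRT construction to produce a Galois element at which the two signature products disagree. Your Case~1/Case~2 correspond precisely to the paper's ``$s$ even''/``$s$ odd'' cases, and your direct choice $k\equiv r_{j_1}-3+8b^*\pmod{16b^*}$ is a clean way to force $x=1$ in Proposition~\ref{compare signature}; the paper instead builds $k$ explicitly via a Bezout identity and then reads off the same parity.

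There is, however, one genuine gap in your setup. You assert that $G_{\mathbf{p}}$ is an elementary abelian $2$-group by \cite[Thm.~6.7]{NRWZ20a}, and use this to write $[X]=\bt_{l}[\BB_{b_{k_l}}]$ with \emph{distinct} indices. That citation only shows $G_{\mathbf{p}}$ \emph{contains} an infinite-rank elementary $2$-group; in fact $[\BB_b]^2$ is an Ising class, not $[\Vec]$, so $G_{\mathbf{p}}$ is not $2$-torsion and such a representation need not exist. The paper accordingly allows arbitrary exponents $\gamma_l,s\in\BZ$ on the $\BB$-factors. The fix is painless and does not disturb the rest of your argument: write $[X]=\bt_l[\BB_{b_{k_l}}]^{\gamma_l}$, observe that $I$ lands in the elementary $2$-group $\UU_2$ so that $\prod_l\e_{\BB_{b_{k_l}}}^{\gamma_l}=\prod_{l:\gamma_l\text{ odd}}\e_{\BB_{b_{k_l}}}$, and then run your Case~1/Case~2 dichotomy on whether the multiplicity of $\BB_{b^*}$ is even or odd. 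With this correction your proof is complete and essentially identical to the paper's.
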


\begin{proof}
It suffices to show that no finite product of
distinct $\DD_{r_j}$'s and elements in $G_{\mathbf{p}}$ yields $[\Vec]$ (elements in $G_{\mathbf{p}}$ may have multiplicities in this product). To this end, we take an arbitrary finite subsequence of $\{r_i\}$ denoted by $\{r_{j_{w}}\mid 1 \le w \le m\}$. We also pick an arbitrary finite subsequence $\{p_l \mid 1 \le l \le n\}$ of $\mathbf{p}$ such that $2r_{j_{1}} - 1 \ne p_l$ for any $1 \le l \le n$. In other words, $p_l \ne b_{j_{1}}$ for any $1 \le l \le n$.

Now for any $s, \gamma_l \in \BZ$, we consider the finite product \[\FF:=\left(\bt_{w=1}^{m}{\DD_{r_{j_w}}}\right)\bt\left(\bt_{l=1}^{n}\BB^{\bt\gamma_l}_{p_{l}}\right)\bt \left(\BB_{b_{j_{1}}}^{\bt s}\right)\,.\]

By the previous results, we have $\sqrt{\dim(\DD)}\in \BQ_N$, where $N=\lcm_{1\leq w\leq m\,,~1\leq l\leq n}\{32N_{r_{j_w}},32p_{l}\}$. Suppose that $[\EE]=[\Vec]$, then by \cite[Thm.~3.5]{NRWZ20a}, for any $\mu\in \gal(\overline{\BQ}/\BQ)$, then we would have 
\[\e_{\FF}(\mu)=1\,.\]

Let $K=\lcm_{2\leq w\leq m\,,~1\leq l\leq n}\{32b_{j_w},32p_{l}\}$, then $(b_{j_1},K)=1$ by construction. Hence, by Bezout's identity, there exist $x,y\in \BZ$ such that
\begin{equation}\label{eq:Bezout}
 xb_{j_1}+yK=1\,.
\end{equation}

Define
\[t=\begin{cases}
-(r_{j_1}-4)xb_{j_1}+r_{j_1}-3 & \text{ if } s \text{ is odd}\,,\\
-(2r_{j_1})xb_{j_1}+2r_{j_1}+1 & \text{ if } s \text{ is even}\,.
\end{cases}\]
Since $r_{j_1}-4=8(10k+1)$ and $4\mid 2r_{j_1}$, we have
\[t\equiv 1 \pmod{K}
\quad\text{and}\quad 
t\equiv \begin{cases}
r_{j_1}-3\pmod{2b_{j_1}} & \text{ if } s \text{ is odd}\,,\\
2r_{j_1}+1\pmod{2b_{j_1}} & \text{ if } s \text{ is even}\,.
\end{cases}\]
Again by Lemma \ref{lem:field}, we have $(t,N)=1$. Moreover, we use Propositions \ref{prop:Period}, \ref{prop:Cr-sign} and \ref{compare signature} and get the following two cases (note that $x$ is odd by construction). To avoid too many subscripts in the expression of the signatures, we temporarily write $\e^{\DD}_{w} := \e_{\DD_{r_{j_{w}}}}$ for $1 \le w \le m$, $\e^{\BB}_{l}:=\e_{\BB_{p_{l}}}$ for $1 \le l \le n$, and we write $\e^{\BB}_{j_{1}} := \e_{\BB_{j_{1}}}$ in this proof. Note that $\e_{\BB^{\bt s}_{b_{j_{1}}}}=(\e^{\BB}_{j_{1}})^s$.

Firstly, when $s$ is odd, there exists $\sm\in \GQ$, such that $\sm|_{\BQ_{N}}=\sm_{t}$, and
\[
\begin{split}
&\e^{\DD}_{w}(\sm) =  \e^{\BB}_{l}(\sm)=1
\quad\text{ for } 1\le w \le m\,,\ 1 \le l \le n\,,\\
&\e^{\BB}_{j_{1}}(\sm_{8(10k+1)xb_{j_1}+r_{j_1}-3})^s=(-1)^{(10k+1)xs}=-1\,.
\end{split}
\]
Secondly, when $s$ is even, there exists $\eta\in \GQ$, such that $\eta|_{\BQ_{N}}=\eta_{t}$, and
\[
\e^{\DD}_{w}(\eta) = \e^{\BB}_{l}(\eta)=\e^{\BB}_{j_{1}}(\eta)^s=1 \text{ for } 2\le w \le m\,,\ 1 \le l \le n\,,\text{ and } \e^{\DD}_{1}(\eta)=-1\,.
\]
Therefore, in either case, $\e_\FF = \prod_{w=1}^{m} \e^\DD_w \cdot \prod_{l=1}^{n} (\e^\BB_l)^{\gamma_{l}} \cdot (\e^{\BB}_{j_{1}})^s$ is not the constant function 1, which is a contradiction.
\end{proof}

\section{An order two Witt class with property \texorpdfstring{$S$}{}}
Recall that a non-degenerate braided fusion category $\CC$ has \emph{property S} if it is completely anisotropic, simple and unpointed. The subgroup of $\WW$ generated by the categories with property $S$ is denoted by $\WW_{S}$.
In this section, we prove that $[\DD_4]\in \WW_{S}$. Since $\DD_4$ has order 2 in $\WW$ (see \eqref{eq:ord2}), it gives a positive answer to the open question \cite[Question~6.8]{DMNO}. From Theorem \ref{Not inside group even}, $[\DD_4]\notin \WW_\pt$, hence it suffices to prove $\DD_4$ is simple and completely anisotropic. 

\begin{lem}\label{Lem:Simplicity}
For any positive integer $r\geq 2$, the category $\DD_r$ is simple. 
\end{lem}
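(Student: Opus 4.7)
The strategy is to classify fusion subcategories of $\DD_r$ by pulling them back to $\CC_r$ through the local-module construction of Lemma~\ref{lem:de-gauging}, and then to show that no proper, non-trivial fusion subcategory of $\CC_r$ sits strictly between the Tannakian $\mathcal{E}=\Rep(G)$ and its M\"uger centralizer $\mathcal{E}'$.

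First, I would invoke the standard correspondence (see the de-equivariantization results in \cite{DMNO} and \cite{DGNO}) between fusion subcategories of the local-module category $(\CC_r)^0_{A_r}$ and $G$-invariant fusion subcategories $\mathcal{B}$ of $\CC_r$ with $\mathcal{E}\subseteq\mathcal{B}\subseteq \mathcal{E}'$. For odd $r$, the additional splitting $(\CC_r)^0_{A_r}\cong \Sem\boxtimes \DD_r$ from Lemma~\ref{lem:de-gauging} shows that fusion subcategories of $\DD_r$ correspond to those intermediate $\mathcal{B}$ whose image in $(\CC_r)^0_{A_r}$ centralizes the $\Sem$ factor. Hence simplicity of $\DD_r$ reduces to showing that the only such intermediate $\mathcal{B}$ are $\mathcal{E}$ itself (yielding the trivial subcategory of $\DD_r$) and $\mathcal{E}'$ (yielding all of $\DD_r$).

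Second, I would compute $\mathcal{E}'$ inside $\CC_r$ explicitly using the monodromy formula for invertibles: an irreducible $V_\mu$ centralizes an invertible $\la_a$ precisely when $\beta_{V_\mu,\la_a}\circ\beta_{\la_a,V_\mu}=\id$, which via \eqref{eq:twist-def} and the twist values in Lemma~\ref{lem:de-gauging} translates into a linear congruence on the image of $\mu$ in the center $\fP/\fQ$. This pins down $\mathcal{E}'$ as an explicit ``half'' (for odd $r$) or ``adjoint'' (for even $r$) subcategory of $\CC_r$ whose Frobenius--Perron dimension matches the dimension identities stated immediately after Lemma~\ref{lem:de-gauging}.

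Third, and this is where the main work lies, I would enumerate the intermediate fusion subcategories $\mathcal{E}\subseteq\mathcal{B}\subseteq\mathcal{E}'$. By the classification of fusion subcategories in modular categories \cite{DGNO}, these are controlled by pairs of subgroups of the group of invertibles $G(\CC_r)$ satisfying an orthogonality condition; since $G(\CC_r)\cong \BZ/4\BZ$ or $(\BZ/2\BZ)^2$, the pointed intermediate candidates reduce to a short list, easily eliminated by the twist data from Lemma~\ref{lem:de-gauging}. Ruling out non-pointed intermediate $\mathcal{B}$ is the main obstacle; here I would argue that any such $\mathcal{B}$ would have to contain the adjoint $(\CC_r)_{\mathrm{ad}}$, and then use that the vector representation $V_{e_1}$ already lies in $\mathcal{E}'$ and tensor-generates everything outside the pointed part of $\CC_r$, forcing $\mathcal{B}=\mathcal{E}'$. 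Combined with the centralizer-of-$\Sem$ condition for odd $r$, this leaves only $\Vec$ and $\DD_r$ itself as fusion subcategories of $\DD_r$.
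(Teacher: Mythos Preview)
The paper takes a completely different and much shorter route. It argues directly inside $\DD_r$: by Lemma~\ref{lem:de-gauging} and \cite[Lemma~1]{Sch20} the category $\DD_r$ is unpointed and contains no nontrivial simple object of integer Frobenius--Perron dimension, so Deligne's theorem on symmetric fusion categories forces the M\"uger center of any fusion subcategory $\RR\subseteq\DD_r$ to be trivial. Thus every fusion subcategory of $\DD_r$ is already non-degenerate, and simplicity follows immediately from \cite[Theorem~2]{Sch20}. No pullback to $\CC_r$, no computation of $\mathcal{E}'$, and no enumeration of intermediate subcategories is needed.

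Your proposed route has genuine gaps in the final step. The assertion that any non-pointed intermediate $\mathcal{B}$ must contain $(\CC_r)_{\mathrm{ad}}$ is not a general fact about fusion subcategories, and you give no argument specific to $\CC_r$ for it. The claim that $V_{e_1}$ ``tensor-generates everything outside the pointed part of $\CC_r$'' is false: in type $D_r$ the weight $e_1$ lies in the vector coset of $\fP/\fQ$, so its tensor powers remain in the root-lattice and vector sectors and never reach the two spinor cosets. Even granting your first-step correspondence (which, in the precise form you state, is not quite off-the-shelf from \cite{DMNO,DGNO}), you still lack a mechanism to exclude intermediate $\mathcal{B}$ supported on spinor classes, and the logical chain from ``$\mathcal{B}$ contains the adjoint'' to ``$\mathcal{B}=\mathcal{E}'$'' is not actually connected. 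The paper's integer-dimension obstruction via Deligne bypasses all of this.
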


\begin{proof}
From Lemma \ref{lem:de-gauging} and \cite[Lemma~1]{Sch20}, we have $(\DD_r)_{\pt}\cong\Vec$, and there is no nontrivial objects in $\DD_r$ of integer dimension. Hence if $\RR\subset \DD_r$ is a fusion subcategory, then by Deligne's Theorem \cite[Section~9.9]{EGNO} and above discussion, we have $\RR'\subset (\DD_r)_{\pt}\cong\Vec$, which implies $\RR$ is non-degenerate. Now the Lemma follows from \cite[Theorem~2]{Sch20}.
\end{proof}

Now let $r = 4$. Recall from Lemma \ref{lem:de-gauging} that in this case, $(\CC_4)_\pt \cong \Rep(\BZ/2\BZ\times\BZ/2\BZ)$, which contains the connected \'etale algebra $A_4 =
\Fun(\BZ/2\BZ\times\BZ/2\BZ)$. By definition, $\DD_4 = (\CC_4)^{0}_{A_{4}}$. We will show that $\DD_4$ is completely anisotropic, i.e., it does not have any nontrivial connected \'etale algebra.

By \cite[Lem.~2.24]{Sch18}, all connected \'etale algebras in a pseudounitary braided fusion category have trivial twists. Therefore, it suffices to consider only objects with trivial twists in $\DD_4$. By \cite[Thm.~1.17]{KO02}, such objects are images under the free module functor $F:\CC_4\rightarrow \DD_4$ (see Lemma \ref{lem:de-gauging}) of objects with trivial twists in $\CC_4$, which we describe as follows.

One can compute, by formulas \eqref{eq:q-dim}, that in addition to the $4$ invertible objects in $\CC_4$, there are $8$ more objects with trivial twists, and they are grouped into the sets $X :=\{2\om_2, 2\om_2+4\om_1, 2\om_2+4\om_3, 2\om_2+4\om_4\}$ 
and 
$Y := \{2\om_1+\om_2+2\om_3,2\om_1+\om_2+2\om_4,\om_2+2\om_3+2\om_4,2\om_1+\om_2+2\om_3+2\om_4\}$.
By \cite[Lemma~2]{Saw02}, the action of the algebra $A_4 \in \CC_4$ on the objects in $X$ and $Y$ (by tensoring) is fixed point free and transitive. In particular, this implies that all of the objects in $X \cup Y$ have trivial double braiding with $A_4$. Therefore, there are only $2$ nontrivial objects with trivial twists in $\DD_4$, which we denote by $Z_1=F(\ld)$ and $Z_2=F(\mu)$ for an arbitrary choice of objects $\ld \in X$ and $\mu \in Y$ (note that $Z_1$ and $Z_2$ does not depend on the choice of $\ld$ and $\mu$).

Moreover, we have 
$$
d_1:=\dim(Z_1)
=28\left(\z_7 + \z_7^6\right) + 14\left(\z_7^2 + \z_7^5\right) + 33
\approx 61.685
$$
and
$$
d_2:=\dim(Z_2)
=
126\left(\z_7 + \z_7^6\right) + 56\left(\z_7^2 + \z_7^5\right) + 157\approx 289.197\,.
$$
Therefore, any nontrivial connected \'etale algebra in $\DD_4$ has to be of the form $L = \1 + a_1Z_1 + a_2Z_2$ for some $a_1$, $a_2 \in \BZ_{\ge 0}$. Moreover, by the local module construction, any connected \'etale algebra $L \in \DD_4$ will give rise to a modular category $(\DD_4)_L^0$ with the property (see \cite[Thm.~4.5]{KO02})
$$
\dim(\DD_4)_L^0 = \frac{\dim(\DD_4)}{\dim(L)^2}\,,
$$
where 
$
\dim(\DD_4) 
= -196
\left[
269(\z_7+\z_7^6)+873(\z_7^2+\z_7^5) + 1357(\z_7^3+\z_7^4)
\right] \approx 489669.5
$.

Since the global dimensions of modular categories are $\ge 1$ (\cite[Thm.~2.3]{ENO05}), upper bounds for $a_1, a_2$ can be given by 
$$
\frac{\dim(\DD_4)}{(1 + a_1d_1)^2} \ge 1
\quad 
\Rightarrow
\quad
a_1 \le 11\,;
\qquad
\frac{\dim(\DD_4)}{(1 + a_2d_2)^2} \ge 1
\quad 
\Rightarrow
\quad
a_2 \le 2\,.
$$ 
Therefore, the set of connected  algebras in $\DD_4$ is a subset of the following set of objects 
$$
E := \{\1 + a_1Z_1 + a_2Z_2\mid 0 \le a_1 \le 11, 0 \le a_2 \le 2\}\,,
$$
with the tensor unit $\1$ corresponding to $a_1 = a_2 = 0$. From now on, we will focus on nontrivial objects, so we assume that $a_1$ and $a_2$ are not both 0.

According to \cite[Lem.~5.3 (c)]{NSW19}, the dimension of any connected \'etale algebra $L \in E$ is totally positive, i.e., all the Galois conjugates of $\dim(L)$ are positive real numbers. This gives more restrictions on $a_1$ and $a_2$. Indeed, using \texttt{SageMath} \cite{sage}, one can see that for any $L \in E$, the Galois conjugates of $\dim(L) = 1 + a_1d_1 + a_2d_2$ are of the form $1+a_1d_1'+a_2d_2'$ and $1 + a_1d''_1 + a_2d''_2$. Here, $d_1'$, $d''_1$ are Galois conjugates of $d_1$ with $d_1' \approx -4.688$ and $d''_1 \approx 0.003$; and $d_2'$, $d''_2$ are Galois conjugates of $d_2$ with $d_2' \approx 0.016$ and $d''_2 \approx -0.213$. Therefore, by direct computation, the only nontrivial $L \in E$ with totally positive dimensions are $L = \1 + Z_2$ and $L = \1 + 2Z_2$. In other words, these are the only two candidates for nontrivial connected \'etale algebras in $\DD_4$.  

Next observe that since $\dim(\DD_4)_L^0$ is an algebraic integer (\cite[Rmk.~2.5]{ENO05}), the algebraic norm of $\frac{\dim(\DD_4)}{\dim(L)^2}$ has to be a (rational) integer. Again using \texttt{SageMath}, we can easily see that only for $L = \1 + 2Z_2$,  $\dim(\DD_4)/\dim(L)^2$ has integral norm.

Finally, since the local module construction preserves pseudounitarity (\cite[Lem.~2.24]{Sch18}), $(\DD_4)_L^0$ is pseudounitary for any connected \'etale algebra $L$. As the FP-dimension of objects are $\ge 1$ (\cite[Prop.~3.3.4]{EGNO}), we have either $\frac{\dim(\DD_4)}{\dim(L)^2} = 1$ or $\frac{\dim(\DD_4)}{\dim(L)^2} \ge 2$. By direct computation, we have
$$
\frac{\dim(\DD_4)}{\dim(\1 + 2Z_2)^2} \textbf{}\approx 1.459\,.
$$
Therefore, $L = \1+2Z_2$ cannot be a connected \'etale algebra.

Combining the above discussions, we have
\begin{thm}
The modular category $\DD_4$ is completely anisotropic, hence $[\DD_4]\in \WW_S$ .\qed
\end{thm}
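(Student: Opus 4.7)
The plan is to show directly that $\DD_4$ contains no nontrivial connected étale algebra by enumerating candidates and excluding them one by one. Simplicity of $\DD_4$ is already handled by Lemma \ref{Lem:Simplicity}, and non-pointedness comes from Lemma \ref{lem:de-gauging} (or Theorem \ref{Not inside group even}, which rules out $[\DD_4]\in\WW_\pt$), so complete anisotropy is the only remaining property to verify.

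First I would reduce the search space. Since $\DD_4$ is pseudounitary, \cite[Lem.~2.24]{Sch18} forces any connected étale algebra in $\DD_4$ to have trivial twist, and by \cite[Thm.~1.17]{KO02} such objects come from objects of $\CC_4$ with trivial twist via the free module functor $F:\CC_4\to\DD_4$. Using \eqref{eq:q-dim} directly, I would list all $\lambda\in\fA_4$ with $\theta_\lambda=1$: beyond the four invertibles there are exactly eight, forming the two orbits $X$ and $Y$ under the pointed action. Since the action of the algebra $A_4$ is fixed-point free and transitive on each orbit (via \cite[Lem.~2]{Saw02}), $F$ identifies each orbit into a single simple object of $\DD_4$, giving precisely two nontrivial candidates $Z_1$ and $Z_2$, whose dimensions are computable from the quantum group data.

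Next I would narrow down the form of a hypothetical connected étale algebra. Any such $L$ must be of the form $\1+a_1 Z_1+a_2 Z_2$ with $a_1,a_2\in\BZ_{\geq 0}$. Three successive constraints cut this to nothing: (i) the inequality $\dim((\DD_4)_L^0)=\dim(\DD_4)/\dim(L)^2\geq 1$ (from \cite[Thm.~2.3]{ENO05}) bounds $a_1\le 11$ and $a_2\le 2$; (ii) total positivity of $\dim(L)$ (via \cite[Lem.~5.3(c)]{NSW19}) applied to each Galois conjugate of the dimension leaves only $L=\1+Z_2$ and $L=\1+2Z_2$; (iii) the algebraic integrality of $\dim(\DD_4)/\dim(L)^2$ (again \cite[Rmk.~2.5]{ENO05}) eliminates $\1+Z_2$ by a norm computation, and the pseudounitarity bound $\dim(\DD_4)/\dim(L)^2\in\{1\}\cup[2,\infty)$ eliminates $\1+2Z_2$ since the ratio lies strictly between $1$ and $2$. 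Hence no nontrivial connected étale algebra exists, and $\DD_4$ is completely anisotropic.

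The main obstacle is step (ii): organizing the Galois conjugates of $d_1=\dim(Z_1)$ and $d_2=\dim(Z_2)$, which live in $\BQ(\zeta_7)^+$, and checking total positivity of $1+a_1 d_1+a_2 d_2$ simultaneously at all real embeddings. Because the conjugates of $d_1$ and $d_2$ have opposite signs and very different magnitudes, the positivity inequalities are tight and must be handled with care; this is where I would rely on a computer algebra verification (as the authors do with \texttt{SageMath}) to rigorously discard the remaining $(a_1,a_2)$ pairs. The final two exclusions in step (iii) are then short explicit computations, but conceptually they already use two distinct invariants (integrality of the algebraic norm, and the FP-dimension gap), so one should be careful to justify each.
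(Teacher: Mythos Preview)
Your proposal is correct and follows essentially the same approach as the paper: reduce to trivial-twist objects via \cite[Lem.~2.24]{Sch18} and \cite[Thm.~1.17]{KO02}, identify the two nontrivial simples $Z_1,Z_2$ using the $A_4$-orbit structure, then eliminate all candidates $\1+a_1Z_1+a_2Z_2$ by the same three filters (dimension bound, total positivity, and the norm/FP-gap arguments). The paper carries out exactly these steps with the same citations and the same \texttt{SageMath}-assisted numerics.
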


\bibliographystyle{abbrv}
\bibliography{Reference}

\end{document}